\documentclass[11pt]{amsart}

\newcommand{\al}{\alpha}

\newcommand{\dd}{\delta}
\newcommand{\ee}{\varepsilon}

\newcommand{\C}{\mathcal{C}}
\newcommand{\N}{\mathcal{N}}
\newcommand{\B}{\mathcal{B}}

\newcommand{\R}{\mathbb{R}}
\newcommand{\Z}{\mathbb{Z}}

\newcommand{\len}{\mathrm{Length}}
\newcommand{\area}{\mathrm{Area}}
\newcommand{\vol}{\mathrm{Vol}}

\newcommand{\D}{\mathcal D}

\newcommand{\vep}{\varepsilon}

\newcommand{\eps}{\varepsilon}
\newcommand{\diam}{\mathrm{diam}\,}

\usepackage{mathtools}
\usepackage{plankssty}
\usepackage{graphicx}
\usepackage{todonotes}

\usepackage{comment}

\newtheoremstyle{note}%                % Name
  {}%                                     % Space above
  {}%                                     % Space below
  {\itshape}%                             % Body font
  {}%                                     % Indent amount
  {\bfseries}%                            % Theorem head font
  {}%                                    % Punctuation after theorem head
  {}%                                    % Space after theorem head, ' ', or \newline
  {}%                                     % Theorem head spec (can be left empty, meaning `normal')
\makeatletter
\def\th@plain{%
  \thm@notefont{}% same as heading font
  \itshape % body font
}
\def\th@note{%
  \thm@notefont{}% same asheading font
  \itshape % body font
}
\makeatother

\theoremstyle{plain} % other options: definition, remark
\newtheorem{theorem}{Theorem}[section]
\newtheorem{conjecture}[theorem]{Conjecture}
\newtheorem{lemma}[theorem]{Lemma}
\newtheorem{claim}[theorem]{Claim}

\theoremstyle{definition}
\theoremstyle{remark}
\newtheorem{remark}{Remark}

\theoremstyle{note}

\usepackage[margin=1.44in]{geometry}

\usepackage{fancyhdr}
\title{On plank coverings of cube annuli}
\author{Gergely Ambrus
\and Matthew Corbo
\and Dakota Nobuyuki Goto
\and Desmond Leitz
\and Kai Mawhinney}

\begin{document}

\date{\today}

\begin{abstract}

We study plank coverings of cube annuli $Q\setminus\delta Q\subset\mathbb{R}^d$, where $Q$ is the $d$-dimensional unit cube and $\delta Q$ is a homothetic copy lying in its interior. A.~Bezdek proved that, in dimension $2$, every plank covering of such an annulus has total width at least~1 whenever $\delta\le1-1/\sqrt2$. Smurov, Bogataya, and Bogatyi extended this result to all $d\ge2$ under the condition $\delta\le1-1/\sqrt d$. It is conjectured that the same lower bound holds for $\delta\le1-1/d$. We provide evidence for this conjecture in dimensions $2$ and $3$ by proving that there exist positive constants $\varepsilon_2$ and $\varepsilon_3$ such that the bound remains valid whenever $\delta\le1-1/\sqrt d+\varepsilon_d$.
\end{abstract}

\maketitle 

\vspace{-0.3in}

\section{Introduction}
In $\R^d$, a {\em plank} is the closed space between two parallel affine hyperplanes, whose distance apart is the \textit{width} of the plank. We are interested in the following question: given a subset $S\subset \mathbb{R}^d$, how can one cover $S$ with planks such that the sum of the plank widths is minimized? Work on this question dates back nearly a century. In 1932, Tarski \cite{tarski1932} conjectured that any plank covering of a convex body $K \subset \R^d$ must have total plank width at least $\omega(K)$, the minimum width of a single plank covering $K$. In other words, Tarski conjectured that the narrowest single plank that covers $K$ is optimal. In 1951, Bang~\cite{Bang1951} proved the conjecture.

There are many varieties of plank problems. We direct the interested reader to Verreault's recent survey paper for more details \cite{verreault2026plank}. In particular, non-convex shapes are the source of many open questions. In this paper, we study plank coverings of \textit{cube annuli} and improve results of A. Bezdek and Smurov, Bogataya, and Bogatyi.
A cube annulus $Q\setminus \delta Q$ is obtained from the unit cube $Q = [0,1]^d$ by the erasure of an open homothetic copy $\delta Q$ with scaling factor $0 < \delta <1$ within its interior. A. Bezdek \cite{Bezdek2003} proved that in the plane, it is impossible to cover such a square annulus by a collection of closed planks whose total width is less than 1 whenever $\delta \leq 1 - 1 /{\sqrt{2}}$.  Smurov, Bogataya, and Bogatyi~\cite{Smurov2010} generalized this bound to higher dimensions. These results can be combined into the following theorem.
\begin{theorem}[\cite{Bezdek2003,Smurov2010}]\label{thm:bezdek-smurov}
  Let $d \geq 2$ be an integer. If $\delta \leq 1 - 1/\sqrt d$, then every plank covering of $Q \setminus \delta Q$ has total width at least 1.
\end{theorem}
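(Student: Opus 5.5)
Let the planks be $P_1,\dots,P_n$ with widths $w_1,\dots,w_n$, and suppose $\sum w_i<1$. We seek an uncovered point of $Q\setminus\dd Q$. The natural tool is Bang's lemma in its affine form: for any unit vectors $u_1,\dots,u_n$ and reals $m_i,\ w_i>0$, one can choose signs $\ee_i\in\{\pm1\}$ so that the point $x=c+\sum_i \ee_i \tfrac{w_i}{2}\, u_i$, with $c$ a fixed centre, lies outside every open-interior-shifted plank. More precisely, Bang's argument maximises the quadratic form $\big|\sum \ee_i w_i u_i/2\big|^2$-type functional over sign vectors and shows that the chosen point avoids the interior of each $P_i$. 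Since our planks are closed, we first enlarge them slightly; this keeps the total width below $1$, so it is harmless. The key output is a point $x$ avoiding all planks with $\|x-c\|_2\le\tfrac12\sum w_i<\tfrac12$. Moreover, the $i$-th coordinate displacement of $x$ from $c$ is controlled: each coordinate of $x-c$ has absolute value at most $\tfrac12\sum w_i|\langle u_i,e_j\rangle|$.

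I would apply this with $c$ the centre of $Q$, or better, run Bang's lemma in the $\ell_\infty$-adapted form. The cube $Q=[0,1]^d$ has minimal width $1$, and Bang's theorem applied to $Q$ gives a point of $Q$ missing all planks. The issue is that this point might lie in $\dd Q$. So instead I would use the freedom in Bang's construction to force the point into the annulus. Take the point $x$ given by Bang's sign-selection started from a vertex-adjacent configuration: Bang's proof actually produces, for a convex body $K$ of width $\ge\sum w_i$, a whole family of candidate points $c+\sum\ee_i w_i u_i/2$ with $c$ ranging over an inner parallel body. Concretely, the set $K'=\{c: c+\tfrac12\sum\ee_iw_iu_i\in K \text{ for all sign choices}\}$ contains a translate-family large enough that we can choose $c$ near the boundary of $Q$.

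The quantitative step: the displacement $v=\tfrac12\sum\ee_iw_iu_i$ satisfies $\|v\|_2<\tfrac12$, hence $\|v\|_\infty<\tfrac12$ too. Choose $c$ on the segment from the centre toward a vertex, say $c=(t,\dots,t)$. Then $x=c+v$ lies in $Q$ provided $t\ge\|v\|_\infty$ roughly, and we need $x\notin\dd Q$, i.e.\ some coordinate of $x$ lies outside $\big((1-\dd)/2,(1+\dd)/2\big)$. The Euclidean bound is what makes $1/\sqrt d$ appear: if all $d$ coordinates of $x$ were inside the inner interval, then $v$ would have each coordinate moving $c$ from $t$ into that interval. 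With $t$ close to $0$ this requires every coordinate of $v$ to be at least $(1-\dd)/2-t$, hence $\|v\|_2\ge\sqrt d\,((1-\dd)/2-t)$. Combined with $\|v\|_2<\tfrac12$ and $1-\dd\ge 1/\sqrt d$, this gives a contradiction as $t\to0$.

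The main obstacle is making the start point $c$ near a vertex legitimate. Bang's lemma needs the candidate points to stay in $K$, or at least we need $x\in Q$. Here I would use Bang's stronger statement that the sign choice can be made so that $x$ stays inside $K$ whenever $\omega(K)>\sum w_i$; applying this with $K$ a suitable region, for instance the cube itself with the greedy maximisation restricted, ensures $x\in Q$. If that fails, I would fall back on applying Bang to the body $K=Q\cap\{y:\sum_j y_j\le s\}$ cut near a corner, whose width is still $\ge1$ for appropriate $s$. Every point of this $K$ with small coordinate sum lies outside $\dd Q$ once $s\le d(1-\dd)/2$. Checking that the width of this corner region is at least $1$ exactly when $\dd\le1-1/\sqrt d$ is the computation that should produce the threshold.
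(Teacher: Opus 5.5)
There is a genuine gap: neither of your two mechanisms produces an uncovered point of $Q\setminus\delta Q$.

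\textbf{The corner construction.} Bang's lemma only says that \emph{some} point of $c+X_{\mathcal C}$ is uncovered, with no control over the signs. To get a contradiction you therefore need the whole translate $c+X_{\mathcal C}$ inside $Q$, i.e.\ $t\ge h_j:=\frac12\sum_i|\langle v_i,e_j\rangle|$ for every $j$. Your hole-avoidance estimate instead needs $\sqrt d\,\bigl(\frac{1-\delta}{2}-t\bigr)>\frac W2$, which at $\delta=1-1/\sqrt d$ means $t<\frac{1-W}{2\sqrt d}$. These requirements are incompatible in general. For $d=2$ with one horizontal and one vertical plank of width $0.45$ each, you need both $t\ge 0.225$ and $t<\frac{0.1}{2\sqrt2}\approx 0.035$. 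Indeed, with $t=0.225$ the point $(0.45,0.45)$ of $c+X_{\mathcal C}$ lies in the centered hole.

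\textbf{The fallback.} It fails quantitatively. The region $K=Q\cap\{\sum_j y_j\le s\}$ has width $s/\sqrt d$ in direction $(1,\dots,1)/\sqrt d$. With $s\le d(1-\delta)/2$ this gives $\omega(K)\le\sqrt d(1-\delta)/2$, which equals $\frac12$ at $\delta=1-1/\sqrt d$, not $1$. Bang's theorem for $K$ could at best give the threshold $\delta\le 1-2/\sqrt d$, which is vacuous for $d=2,3$. This is not an artefact of your particular $K$. By separation, every convex subset of the annulus with centered hole lies in a half-space $\{\langle y,n\rangle\le\gamma\}$ missing the open hole. Its width in direction $n$ is then at most $\frac{1-\delta}{2}\|n\|_1\le\frac{\sqrt d(1-\delta)}{2}$. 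So applying Bang/Tarski to a single convex piece of the annulus always loses a factor of $2$. You also tacitly assume the hole is centered, which the statement does not.

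\textbf{The missing idea.} Stop trying to fit the Bang set into the annulus and fill the hole instead. Add an open plank $P$ with cross-normal $\delta e_j$ covering $(\delta Q)^\circ$, so that $\mathcal C'=\mathcal C\cup\{P\}$ covers all of $Q$. Its Bang set is $(X_{\mathcal C}+\frac\delta2 e_j)\cup(X_{\mathcal C}-\frac\delta2 e_j)$. Its $e_j$-width is $\sum_i|\langle v_i,e_j\rangle|+\delta$, and its other coordinate widths are at most $W<1$.

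If $\sum_i|\langle v_i,e_j\rangle|\le 1-\delta$, this set translates into the box $Q$, contradicting Bang's lemma for $\mathcal C'$. Hence $\sum_i|\langle v_i,e_j\rangle|>1-\delta$ for every $j$. Summing over $j$ and using $\|v_i\|_1\le\sqrt d\,\|v_i\|_2$ gives $W>\sqrt d(1-\delta)\ge 1$ when $\delta\le 1-1/\sqrt d$, contradicting $W<1$.

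The $\sqrt d$ comes from the $\ell_1$--$\ell_2$ comparison applied to the cross-normal vectors, summed over all coordinate directions. It does not come from the Euclidean length of a single displacement. The argument is also insensitive to where the hole sits.
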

It is natural to believe that the optimal value of $\delta$ is manifested by the straightforward coordinate plank covering, described in Subsection~\ref{subseq:cube}. This leads to the following conjecture: 

\begin{conjecture}[\cite{Bezdek2003,Smurov2010}]\label{conj:bezdek}
    If $\delta \leq 1 - 1/d$, every plank covering of $Q \setminus \delta Q$ has total width at least~1.
\end{conjecture}

Our work improves \Cref{thm:bezdek-smurov} in dimensions 2 and 3 by extending the range of $\dd$ and hence showing that in these cases, the bound $\dd\le 1-1/\sqrt d$ is not sharp. This result provides a step towards proving \Cref{conj:bezdek} for $d=2,3$.

\begin{theorem}\label{thm:main}
For $d=2, 3$, there exist $\eps_d>0$ such that the following holds. If
    \[\delta_d\le 1-1/\sqrt{d}+\eps_d,\]
then any finite collection of closed planks that cover the annulus $Q_d\setminus \delta_d Q_d\subset \R^d$ has total width at least~1.
\end{theorem}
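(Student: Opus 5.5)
We will argue by contradiction. Suppose planks $P_1,\dots,P_n$ of widths $w_1,\dots,w_n$ cover $Q\setminus\delta Q$ with $\sum w_i<1$. Write $u_i$ for the unit normal of $P_i$. The approach is the one behind \Cref{thm:bezdek-smurov}, made quantitative. Choose a suitable finite set or measure on the annulus, for instance the boundary $\partial Q$ together with a family of segments joining opposite facets through the annulus. For each plank, bound the measure it can absorb by $w_i$ times a quantity depending on $u_i$. Then sum.

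\textbf{Step 1: the Bezdek--Smurov argument.} Recall how the $1-1/\sqrt d$ threshold arises. A plank of width $w$ with normal $u$ meets a line in direction $e_j$ in a segment of length $w/|u_j|$. Averaging over coordinate directions and using $\sum_j |u_j|\le\sqrt d$ gives the Bang/Tarski-type estimate. The hole $\delta Q$ forces us to use only lines lying in the annulus. The condition $\delta\le1-1/\sqrt d$ is exactly what makes the worst direction, the diagonal $u=(1,\dots,1)/\sqrt d$, harmless: such a plank must waste length inside the hole or outside $Q$.

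\textbf{Step 2: locating the equality case.} We will identify when the inequality is sharp at $\delta=1-1/\sqrt d$. Tightness needs every "heavy" plank to have normal close to diagonal ($|u|_1\approx\sqrt d$). It also needs these planks to be positioned so that the lines they cover avoid the hole only marginally. For $d=2,3$ the diagonal normals and the facet/corner combinatorics are few enough that we can list the near-extremal configurations. Roughly, planks of near-diagonal direction cut off corners of the cube, and the total width is then distributed among the $2^{d-1}$ diagonal classes.

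\textbf{Step 3: a stability argument.} Take $\delta=1-1/\sqrt d+\eps$. Split the planks into near-diagonal ones (angle to some diagonal at most $\eta$) and the rest.
\begin{itemize}
\item The non-diagonal planks satisfy a strictly better inequality, with gain $c(\eta)>0$ per unit width.
\item The near-diagonal planks need a separate argument. A corner-cutting plank near a diagonal direction cannot cover the corner region and the region near the opposite corner at once. Hence some of the segments must be covered by width that counts for less, using a second family of test lines (for instance lines parallel to facets passing near corners). This again yields an extra gain proportional to their width.
\end{itemize}
Combining the two, the total deficit $O(\eps)$ caused by enlarging the hole is beaten by a uniform gain $c>0$ once $\eps$ is small.

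\textbf{Main obstacle.} The near-diagonal planks are the hard part. One has to show quantitatively that a covering cannot simultaneously make all test-line families tight. This needs a careful case analysis of how diagonal planks meet the annulus in $d=3$, where there are four diagonal directions and the planks can also be oblique to the faces. First we would treat the linear-programming relaxation: the weights on test segments of each type, and the maximal coverage per unit width as a function of $u$. We would then check numerically and by hand that its value exceeds $1$ strictly at $\delta=1-1/\sqrt d$. Continuity in $\delta$ would then supply $\eps_d$.
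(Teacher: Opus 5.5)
Your proposal defers exactly the step that constitutes the proof. The near-diagonal case is labeled the ``main obstacle'' and left to an unspecified LP relaxation and numerics, and you never exhibit an inequality that near-diagonal planks of total width below $1$ must violate.

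The framework around that step also has concrete problems. Step~1 is not the Bezdek--Smurov mechanism. A plank meets a line in direction $e_j$ in length $w/|u_j|\ge w$, so coordinate test lines give no upper bound on coverage per unit width: nearly axis-parallel planks cover arbitrarily long pieces, and diagonal planks are the \emph{least} efficient on such lines. The threshold instead comes from Bang's lemma applied to $\mathcal C\cup\{P\}$, where $P$ is a width-$\delta$ plank with normal $e_j$ covering the hole. Then $X_{\mathcal C}$ cannot be translated into $R_j$, hence $\sum_i w_i|u_{i,j}|>1-\delta$ for every $j$ and $\sum_i w_i\|u_i\|_1>d(1-\delta)$. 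This inequality, not a test measure, controls the non-diagonal class: its total width is at most $1/\alpha$ if ``diagonal'' means $\|u\|_1\ge\sqrt d-d\alpha\eps$.

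Your closing continuity step cannot work as stated either. One plank of width $1$ covers $Q$ for every $\delta$, so no valid lower bound on the total width can exceed $1$ at $\delta_0=1-1/\sqrt d$, and there is no slack there to propagate. Slack appears only when two different inequalities are combined. The Bang inequality is nearly tight only if almost all width is diagonal, and diagonal planks are precisely the ones that cover the boundary region badly.

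The missing ingredient is that second inequality. The paper uses Lebesgue measure on a thin centered collar $A_a=Q\setminus(a,1-a)^d$.
\begin{itemize}
\item In $d=2$, a line at angle $\theta$ meets at most two sides of the collar, in total length at most $a(1/\sin\theta+1/\cos\theta)$. So a planar plank with nearly diagonal normal covers at most about $2\sqrt2\,a\,w$ of $A_a$.
\item In $d=3$, coordinate slices of a nearly diagonal plank are nearly diagonal planar planks of width about $\sqrt{3/2}\,w$. Every point of $A_a$ lies in at least two of the three tubes $A_{xy}\times[0,1]$, etc., which gives about $3\sqrt3\,a\,w$.
\item The hole meets $A_a$ in measure at most $\delta^d-(\delta-a)^d\approx d\delta^{d-1}a$ wherever it sits, while $\vol(A_a)\approx 2da$. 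Since $\sqrt d+\delta_0^{d-1}<2$ for $d=2,3$, a margin $(2-\sqrt d)a$ stays uncovered to leading order.
\end{itemize}
In this measure the non-diagonal planks are far \emph{more} efficient: an axis-parallel plank along a side covers area about $w$. So they are bounded crudely (maximal chord of the collar at most $1+a^2$, maximal planar section of the cube $\sqrt2$) and neutralized through $w(\mathcal N)\le1/\alpha$. This dictates choosing $a$ small, then $\alpha$ large compared with $1/a$, then $\eps$ small compared with $1/\alpha$.

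The test sets you suggest would not do this job.
\begin{itemize}
\item On $\partial Q$ itself, an arbitrarily thin axis-parallel plank contains a whole facet, so no bound of the form $w_i f(u_i)$ exists.
\item Facet-parallel lines near the corners double-count the corner squares, where a diagonal plank can put all of its collar coverage. In $d=2$, a thin plank along $x+y=1$ meets all four families of such lines, near $(1,0)$ and $(0,1)$. It absorbs $4\sqrt2\,a\,w$ of a test mass $4a$, which is useless once the total width exceeds $1/\sqrt2$.
\item That same plank passes through both opposite corners $(1,0)$ and $(0,1)$, contrary to your corner heuristic.
\end{itemize}
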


As shown by Remark~\ref{rk:open planks} below, it suffices to prove Theorem~\ref{thm:main} for collections of open planks.

We mention some further results related to plank coverings of annuli obtained from convex bodies. White and Wisewell~\cite{WhiteWisewell2007} characterized the convex polygons for which removing any sufficiently small homothetic copy leaves the minimum total covering width unchanged. Ambrus, Huddell, Lai, Quirk, and Williams~\cite{AmbrusEtAl2025} extended their counterexample construction to planar and spatial convex bodies that are spiky in a minimal width direction: suitably placed homothetic holes of arbitrarily small size allow coverings with total width strictly below the minimal width of the original body. Very recently, Bakaev and Yehudayoff~\cite{BakaevYehudayoff2026} proved a related conjecture of A. Bezdek by showing that every finite plank covering of a circular disk with a sufficiently small concentric disk removed can be rearranged to cover the entire disk.

After this brief historical overview, we continue with some definitions and notational conventions.

\subsection{General notation}

Throughout the paper, we work in Euclidean space $\mathbb{R}^d$ with $d \geq 2$. Let $Q_d \coloneqq [0,1]^d \subset \mathbb{R}^d$ be the unit cube in $\R^d$. We will use the notation $Q = Q_d$, and the dimension $d$ will always be clear from the context.

Homotheties are defined as follows. For a scaling factor $\delta > 0$ and a homothetic center $p\in\mathbb{R}^d$, write $\delta_p S$ for the image of $S \subset \R^d$ under the map $x \mapsto p + \delta(x - p)$. We say that $\delta_p S$ is the \textit{homothetic image of $S$ about $p$ by $\delta$}. Let $(\cdot)^\circ$ and $\overline{(\cdot )}$  denote topological interior and topological closure. We are interested in the \textit{cube annulus} $Q \setminus (\delta_p Q)^\circ$, where $\delta\in (0,1)$ and $p\in Q^\circ$. For brevity, we write
    \[
    Q \setminus \delta Q \ \coloneqq\  Q \setminus (\delta_p Q)^\circ
    \]
for a cube annulus. We have suppressed the homothetic center $p$ in our notation because our arguments work whenever $\dd_p Q\subset Q$. 

We also introduce \emph{centered} annuli. Given a thickness parameter $a\in (0,1/2)$, let $Q_a = (a,1-a)^d\subset \R^d$. We define
\begin{equation}\label{eq:Adef}
A_a = Q\setminus Q_a
\end{equation}
to be the closed centered annulus of thickness $a$. The dimension $d$ will be clear from context. 
For $d=1, 2, 3$, the Lebesgue measure on $\R^d$ will be called length, area, and volume, and denoted by  $\len(\cdot )$, $\area(\cdot )$, and $\vol(\cdot )$, respectively. We write $\vol_d(\cdot)$ for $d\ge 4$. For $p \in [1, \infty]$, we will denote the $\ell_p$-norm on $\R^d$ by $\|\cdot \|_p$. The Euclidean norm $\|\cdot\|_2$, or length, will simply be denoted by $\|\cdot\|$. 

A vector $v \in \R^d$ is called {\em diagonal} if its coordinates are identical up to sign. Among vectors with a fixed length, such vectors maximize the $\ell_1$-norm: 
\begin{claim}\label{claim:max1norm}
    For any $v\in\mathbb{R}^d$,
    \begin{equation*}
\|v\|_1\le \sqrt d\|v\|_2
\end{equation*}
with equality if and only if $v$ is diagonal.
\end{claim}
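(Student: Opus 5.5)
The plan is to apply the Cauchy--Schwarz inequality to the sign vector of $v$, and then read off the equality case. Let $v=(v_1,\dots,v_d)$ and define $s\in\R^d$ by $s_i=\operatorname{sgn}(v_i)$ when $v_i\neq 0$ and $s_i=1$ when $v_i=0$. Then $s_i v_i=|v_i|$ for every $i$, and $\|s\|_2=\sqrt d$. We obtain
\[
\|v\|_1=\sum_{i=1}^d |v_i| = \langle s, v\rangle \le \|s\|_2\,\|v\|_2 = \sqrt d\,\|v\|_2 ,
\]
which is the inequality.

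For the equality case, recall that equality in Cauchy--Schwarz holds exactly when $s$ and $v$ are linearly dependent. Since $s\neq 0$, this means $v=\lambda s$ for some $\lambda\in\R$. Now $\langle s,v\rangle=\|v\|_1\ge 0$, so equality in fact forces $v=\lambda s$ with $\lambda\ge0$. In particular $|v_i|=\lambda$ for all $i$, so the coordinates of $v$ agree up to sign and $v$ is diagonal.

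Conversely, suppose $v$ is diagonal, so $|v_i|=c$ for all $i$ and some $c\ge 0$. Then $\|v\|_1=dc$ and $\|v\|_2=\sqrt d\,c$, and equality holds. The zero vector is diagonal under this convention, and it also satisfies equality trivially.

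The only point needing care is the treatment of coordinates with $v_i=0$ in the choice of $s$. If $v\neq0$ has some zero coordinate but not all, then $v$ is not proportional to any $\pm1$ vector, so the inequality is strict; this is consistent with $v$ not being diagonal. An equivalent route avoiding this bookkeeping is to compare $\|v\|_1^2$ with $d\|v\|_2^2$ via the QM--AM inequality applied to $|v_1|,\dots,|v_d|$. Its equality case is exactly that all $|v_i|$ are equal.
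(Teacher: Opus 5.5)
Your proof is correct and is essentially the paper's argument: the paper applies Cauchy--Schwarz to $(|v_1|,\dots,|v_d|)$ and $(1,\dots,1)$, which is the same as pairing $v$ with its sign vector $s$. Your equality case is more complete than the paper's, which only notes that equality holds when all $|v_i|$ coincide and leaves the converse direction implicit.
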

\begin{proof}
For this argument exclusively, let $v_1, \ldots, v_d$ denote the coordinates of $v$. Then
\begin{align*}
\|v\|_1 = \sum_{i=1}^d |v_i| \leq \sqrt{d} \cdot \sqrt{\sum_{i=1}^d v_i^2} = \sqrt{d}   \cdot \|v\|_2 
\end{align*}
by the Cauchy-Schwarz inequality. Equality holds if all $|v_i|$ are identical.
\end{proof}

Given two lines, or a vector and a line in the plane, their {\em angle} is understood to be the smaller nonnegative angle $\theta\in[0,\frac\pi2]$ enclosed by them. The angle of a line is its  angle with the $x$-axis; for a planar plank, its angle is that of its central line.

For $v\in\R^2$, $\|v\|_1$ depends only on $\|v\|$ and the angle
between $v$ and the $x$-axis: writing
$v=\|v\|(\pm\cos\theta,\pm\sin\theta)$ gives
\[\|v\|_1 = \|v\|\left(\cos\theta+\sin\theta\right).\]
In particular, rotating $v$ through $\frac\pi2$ replaces $\theta$ by
$\frac\pi2-\theta$ and hence leaves $\|v\|_1$ unchanged.

Lastly, given a vector $v \in \R^d$, we will denote by $\pi_v$ the orthogonal projection onto the linear span of $v$. Working in the standard basis $e_1, \ldots, e_d$ of $\R^d$, we will simply denote $\pi_{e_i}$ by $\pi_i$ for $i \in [d]:= \{ 1, \ldots, d\}$. We say the \emph{width} of a subset $S\subset \R^d$ in the direction of standard basis vector $e_i$ is $\mathrm{diam}(\pi_i(S))$, the diameter of the set $\pi_i(S)$.

\subsection{Planks}\label{subsection:planks}

A \emph{plank} in $\R^d$ is the closed region
between two distinct parallel affine hyperplanes. It can be written as 
    \begin{equation*}\label{eq:plankdef}
    P(v,t)\ \coloneqq\ \Big\{x\in\R^d:\ \Big| \Big\langle x,\frac v {\| v\|} \Big \rangle - t \Big| \le  \frac {\|v\|}{2} \Big \}
    \end{equation*}
where $v \in \R^d$ is nonzero. We use the conventions that
\begin{itemize}
    \item $v = v(P)\in \mathbb{R}^d$ is the \textit{cross-normal vector} of $P$, see \Cref{fig:plank-in-r2};
    \item $u = u(P) := v/\norm v$ is the \textit{unit normal vector} of $P$;
    \item $w(P) \coloneqq \norm v$ is the \textit{width} of $P$;
    \item $t\in \R$ is the signed distance from the \emph{central hyperplane}
$\{x\in\R^d:\langle x,u\rangle = t\}$ to the origin; when $d=2$ we call this
line the \emph{midline} of $P$.
\end{itemize}
An \emph{open} plank is the interior of a closed plank, with the above quantities being defined for it as those of its closed counterpart.

\begin{remark}\label{rk:open planks}
    Suppose there exists a covering of some set $S\subset \R^d$ by a collection of closed planks $\C$ with $w(\C)<1$. One may slightly expand these planks so as to maintain the total width being less than 1, while obtaining a collection of {\em open} planks that still covers $S$. This implies there also exists a collection of open planks with total width less than 1 that covers $S$.
\end{remark}

\begin{figure}[H]
\centering
\begin{tikzpicture}[scale=1.2]

% Draw the plank in a rotated coordinate system
\begin{scope}[rotate=14]

  % Plank fill
  \fill[blue!50!green, opacity=0.25] (-2.8,-0.35) rectangle (2.8,0.35);

  % Boundary lines
  \draw[thick] (-2.8,-0.35) -- (2.8,-0.35);
  \draw[thick] (-2.8,0.35) -- (2.8,0.35);

  % Central line
  \draw[dashed] (-2.8,0) -- (2.8,0);

  % cross-normal vector
  \draw[thick,->] (0,-0.35) -- (0,0.35);
  \node[right] at (.02,.21) {$v$};

  % Plank label
  \node at (1.8,0.55) {$P$};

\end{scope}

\end{tikzpicture}
\caption{A plank in $\mathbb{R}^2$ and its cross-normal vector.}
\label{fig:plank-in-r2}
\end{figure}

Throughout the proof, a {\em collection of planks} will always mean a finite set of open planks. Some constructions use closed planks though. 

Given a collection $\C$ of planks, we define the \textit{total width} $w(\C)$ to be
\begin{equation*}\label{eq:totalwidth}
w(\C) = \sum_{P \in \C} w(P).
\end{equation*}
Planks in $\C$ will be denoted $P_1, \ldots, P_n$. For $i\in \{1,\dots , n\}$, let $v_i$ and $u_i$ denote the cross-normal vector and unit normal vector of $P_i$ respectively.  

When talking about covering, we identify $\C$ by the union of its members. Thus, we say that $\C$ \textit{covers} a set $X \subset \R^d$ if $X \subset \bigcup_{P\in \C} P$. We say $X$ is \textit{uncovered} by $\C$ otherwise. A \textit{plank covering} of a set $S$ is a covering of $S$ by a finite collection of open planks. 

A key tool in the theory of plank coverings is Bang's Lemma.

\begin{lemma}[\cite{Bang1951}]\label{lem:bang}
    Let $\C$ denote a collection of open planks $P_1, \ldots, P_n \subset \R^d$ with cross-normal vectors $v_1, \ldots, v_n$. Then any translate of the set
    \[X_\C \coloneqq \left\{\sum_{i=1}^n\frac12\vep_i v_i\mid\vep_i=\pm 1\right\}\]
    is uncovered by $\C$. 
\end{lemma}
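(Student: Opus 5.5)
The statement to prove is Bang's Lemma, the classical statement that a translate $c+X_\C$ always contains a point missed by every open plank. I would give Bang's original extremal argument. Fix a translate $c+X_\C$. Write each open plank as $P_i=\{x: |\langle x,u_i\rangle - t_i| < w_i/2\}$ with $w_i=\|v_i\|$ and $v_i=w_iu_i$. For a sign vector $\vep\in\{\pm1\}^n$ put $x_\vep = c+\frac12\sum_j \vep_j v_j$. The goal is to find a sign vector $\vep$ with $|\langle x_\vep,u_i\rangle - t_i|\ge w_i/2$ for every $i$, since then $x_\vep$ lies in no open plank.

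The key step is to choose $\vep$ maximizing a suitable quadratic functional. Write $m_i:=t_i-\langle c,u_i\rangle$. Then
\[\langle x_\vep,u_i\rangle - t_i = \tfrac12\sum_j \vep_j\langle v_j,u_i\rangle - m_i,\]
and multiplying by $w_i$ gives
\[w_i\bigl(\langle x_\vep,u_i\rangle - t_i\bigr)=\tfrac12\sum_j\vep_j\langle v_j,v_i\rangle - w_im_i.\]
Define
\[F(\vep)=\Bigl\|\tfrac12\sum_j\vep_jv_j\Bigr\|^2-\sum_j \vep_j w_j m_j,\]
and let $\vep$ maximize $F$ over the finitely many sign vectors. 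Now compare with the vector $\vep'$ obtained by flipping the $i$-th sign. Write $s=\frac12\sum_j\vep_jv_j$, so that $s'=s-\vep_iv_i$. Then
\[F(\vep)-F(\vep')=\|s\|^2-\|s-\vep_iv_i\|^2-2\vep_iw_im_i = 2\vep_i\langle s,v_i\rangle-\|v_i\|^2-2\vep_iw_im_i\ge0.\]
Rearranging gives $\vep_i w_i\bigl(\langle s,u_i\rangle-m_i\bigr)\ge w_i^2/2$. Since $\langle s,u_i\rangle - m_i=\langle x_\vep,u_i\rangle-t_i$, dividing by $w_i>0$ yields
\[\vep_i\bigl(\langle x_\vep,u_i\rangle-t_i\bigr)\ge w_i/2.\]
Hence $|\langle x_\vep,u_i\rangle-t_i|\ge w_i/2$, so $x_\vep\notin P_i$. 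This holds for every $i$ simultaneously, which proves the lemma.

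The only real obstacle is guessing the right functional. The linear term must carry the weight $w_jm_j$, and not $m_j$ alone, so that the flip inequality normalizes correctly after dividing by $w_i$. Once $F$ is set up this way, everything is a one-line expansion. Openness of the planks is exactly what makes the non-strict inequality suffice.
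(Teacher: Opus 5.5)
Your proof is correct. It is exactly Bang's classical extremal-sign argument: maximize $\bigl\|\tfrac12\sum_j\vep_jv_j\bigr\|^2-\sum_j\vep_jw_jm_j$ over sign vectors and compare with single sign flips. Each step of the expansion checks out, and openness of the planks is what turns $\vep_i(\langle x_\vep,u_i\rangle-t_i)\ge w_i/2$ into $x_\vep\notin P_i$. The paper gives no proof of this lemma and only cites \cite{Bang1951}, whose argument is the one you wrote down.
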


We note that the diameter of the set $X_{\C}$ is at most $\sum |v_i| = w(\C)$.

\subsection{Plank coverings of cube annuli} \label{subseq:cube}

Heuristically, the `best' ways to cover $Q\setminus \delta Q$ are the two most natural ones, which are depicted in \Cref{fig:natural-coverings}. They are:
\begin{enumerate}
    \item[(A)] Cover $Q\setminus \delta Q$ with a single plank of width 1;
    \item[(B)] Put planks on all sides of the hole $\dd Q$.
\end{enumerate}

\begin{figure}[h!]
    \centering
    \begin{tikzpicture}[scale=3.8]
    
      \def\halfside{0.25}   
      \def\cx{0.10}         
      \def\cy{-0.05}        
    
      \def\xL{\cx-\halfside}  
      \def\xR{\cx+\halfside}  
      \def\yB{\cy-\halfside}  
      \def\yT{\cy+\halfside}  
    
      \begin{scope}[xshift=1.9cm]
        \coordinate (A) at (-0.5,-0.5);
        \coordinate (B) at ( 0.5,-0.5);
        \coordinate (C) at ( 0.5, 0.5);
        \coordinate (D) at (-0.5, 0.5);
    
        \coordinate (Ai) at (\xL,\yB);
        \coordinate (Bi) at (\xR,\yB);
        \coordinate (Ci) at (\xR,\yT);
        \coordinate (Di) at (\xL,\yT);
    
        \fill[gray!22, even odd rule]
          (A)--(B)--(C)--(D)--cycle
          (Ai)--(Bi)--(Ci)--(Di)--cycle;
    
        \begin{scope}
          % \clip[even odd rule];
            %(A)--(B)--(C)--(D)--cycle;
            % (Ai)--(Bi)--(Ci)--(Di)--cycle;
    
          \fill[blue!40,opacity=.30]
            (-0.5,-0.7) rectangle (\xL,0.7);
          \draw[dashed,blue!70] (-0.5,-0.7) -- (-0.5,0.7);
          \draw[dashed,blue!70] (\xL,-0.7) -- (\xL,0.7);
    
          \fill[blue!40,opacity=.30]
            (\xR,-0.7) rectangle (0.5,0.7);
          \draw[dashed,blue!70] (\xR,-0.7) -- (\xR,0.7);
          \draw[dashed,blue!70] (0.5,-0.7) -- (0.5,0.7);
    
          \fill[green!40,opacity=.25]
            (-0.7,-0.5) rectangle (0.7,\yB);
          \draw[dashed,green!70] (-0.7,-0.5) -- (0.7,-0.5);
          \draw[dashed,green!70] (-0.7,\yB) -- (0.7,\yB);
    
          \fill[green!40,opacity=.25]
            (-0.7,\yT) rectangle (0.7,0.5);
          \draw[dashed,green!70] (-0.7,\yT) -- (0.7,\yT);
          \draw[dashed,green!70] (-0.7,0.5) -- (0.7,0.5);
        \end{scope}
    
        % Outer and inner squares
        \draw[thick] (A)--(B)--(C)--(D)--cycle;
        \draw[thick] (Ai)--(Bi)--(Ci)--(Di)--cycle;
    
        % Labels
        \node[anchor=south east] at (D) {$Q$};
        \node[anchor=north west] at (Di) {$\frac12 Q$};
        \node at (0,-0.9) {(B) $2d$-plank covering};
      \end{scope}

      \begin{scope}[xshift=0cm]
        
        \coordinate (A2) at (-0.5,-0.5);
        \coordinate (B2) at ( 0.5,-0.5);
        \coordinate (C2) at ( 0.5, 0.5);
        \coordinate (D2) at (-0.5, 0.5);
    
        \coordinate (Ai2) at (\xL,\yB);
        \coordinate (Bi2) at (\xR,\yB);
        \coordinate (Ci2) at (\xR,\yT);
        \coordinate (Di2) at (\xL,\yT);
    
        \fill[gray!22, even odd rule]
          (A2)--(B2)--(C2)--(D2)--cycle
          (Ai2)--(Bi2)--(Ci2)--(Di2)--cycle;
    
        \begin{scope}
          % \clip[even odd rule]
          %   (A2)--(B2)--(C2)--(D2)--cycle
          %   (Ai2)--(Bi2)--(Ci2)--(Di2)--cycle;
    
          \def\wh{1.0} 
          \fill[orange!40,opacity=.30]
            (-0.7,-\wh/2) rectangle (0.7,\wh/2);
          \draw[dashed,orange!80] (-0.7,-\wh/2) -- (0.7,-\wh/2);
          \draw[dashed,orange!80] (-0.7,\wh/2) -- (0.7,\wh/2);
        \end{scope}
    
        \draw[thick] (A2)--(B2)--(C2)--(D2)--cycle;
        \draw[thick] (Ai2)--(Bi2)--(Ci2)--(Di2)--cycle;
    
        \node[anchor=south east] at (D2) {$Q$};
        \node[anchor=north west] at (Di2) {$\frac12 Q$};
        \node at (0,-0.7) {(A) One-plank covering};
      \end{scope}
    
    \end{tikzpicture}
    \caption{Natural ways to cover $Q\setminus \dd Q$, here $\dd = 1/2$.}
    \label{fig:natural-coverings}
\end{figure}

In $\R^2$, the covering (B) has smaller width than  (A) if and only if $\delta >\frac12$. This led A. Bezdek to conjecture that in the planar case,  total plank width at least 1 is required to cover $Q\setminus \dd Q$ for every $\dd\le \frac12$.

Applying an analogous technique in $\R^d$, we find (B) has smaller total width than (A) if and only if $\delta > 1-\frac1d$. As a corollary of this simple construction, we find that for $\dd>1-\frac1d$, total plank width at least 1 is not required to cover $Q\setminus \dd Q$. This observation is well known, and it motivates Conjecture~\ref{conj:bezdek}.

\subsection{Bezdek's approach}

We describe the proof of \Cref{thm:bezdek-smurov} because it introduces key techniques that are used to prove  \Cref{thm:main}. The crucial observation is the following fact.

\begin{lemma}\label{lemma:new plank} Let $d\ge 2$ be an integer. Assume that a collection $\C$ of open planks covers $Q\setminus \delta Q\subset \R^d$ for some $\delta \in (0,1)$. Then $X_\C$ cannot be translated into any $d$-cell of the form
    \begin{equation*}\label{eq:Rj}
        R_j = [0,1]\times\cdots\times \underbrace{[0,1-\delta]}_{j\text{th coordinate}}\times\cdots\times [0,1]
    \end{equation*}
for any $j\in [d]$.
\end{lemma}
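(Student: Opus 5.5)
The plan is a proof by contradiction: if a translate of $X_\C$ lies in some $R_j$, then that translate must meet the annulus, and Bang's Lemma says it cannot.

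Suppose $X_\C+x\subset R_j$ for some $x\in\R^d$. By Lemma~\ref{lem:bang}, no point of $X_\C+x$ is covered by $\C$. Since $\C$ covers $Q\setminus\delta Q$, every point of $X_\C+x$ must lie in the open hole $(\delta_p Q)^\circ$. So it is enough to show that $R_j$ contains no point of that hole. If this holds, $X_\C+x$ (which is nonempty) would have a point in $R_j\cap(\delta_pQ)^\circ=\emptyset$, a contradiction.

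The main step is locating the hole. Since $\delta_pQ\subset Q$, we may write
\[
\delta_p Q=\prod_{i=1}^d[a_i,a_i+\delta]\qquad\text{with } 0\le a_i\le 1-\delta .
\]
Then the $j$th coordinate of every point of $(\delta_pQ)^\circ$ lies in the open interval $(a_j,a_j+\delta)$. That interval meets $[0,1-\delta]$ unless $a_j\ge 1-\delta$, which forces $a_j=1-\delta$, the hole flush against the face $x_j=1$. So this positioning does not settle the case directly.

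The fix is to use translation freedom, since the lemma is about all translates and reflecting is harmless. I would compare widths rather than positions. The set $X_\C$ is centrally symmetric, so its projection $\pi_j(X_\C)$ is an interval of some length $L_j$. If $L_j\le 1-\delta$, then $X_\C$ can be translated within $R_j$ into either end slab. These are $\{0\le x_j\le 1-\delta\}$ at one end and, by the symmetric reflection of $R_j$, $\{\delta\le x_j\le 1\}$ at the other.

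The hole occupies $x_j\in(a_j,a_j+\delta)$ with $a_j\le 1-\delta$, so it cannot touch both end slabs' complements simultaneously. At least one of the two slabs $[0,a_j]$ and $[a_j+\delta,1]$ has length at least $(1-\delta)/2$ — not enough in general.

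So I expect the cleanest correct reading is the intended one, with the hole positioned so that $R_j$ sits beside it: the region $\{x\in Q:\ x_j\le a_j\}\cup\{x_j\ge a_j+\delta\}$ is uncovered-free, and the two slabs together have $j$-width $1-\delta$. To place $X_\C$, I would split it across the hole using the additive structure of $X_\C$. Write
\[
X_\C=\{\pm\tfrac12 v_1\}+\cdots+\{\pm\tfrac12 v_n\}.
\]
Shifting by a sum of selected $\pm v_i$ moves points past the hole in the $j$th direction, in the spirit of Bezdek's argument.

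I expect this splitting step to be the main obstacle. First I would try the flush case $a_j=0$ after reflecting coordinates, and check whether the paper's convention places $R_j$ adjacent to the hole. In that case the first paragraph's argument finishes immediately.
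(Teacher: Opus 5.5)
Your proposal does not reach a proof, and its opening step misreads Lemma~\ref{lem:bang}. Bang's lemma says that every translate of $X_\C$ is \emph{not entirely} covered by $\C$: it contains at least one uncovered point. It does not say that no point of the translate is covered.

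So from $X_\C+x\subset R_j$ you may only conclude that \emph{some} point of $X_\C+x$ lies in the hole $(\delta_pQ)^\circ$. That is no contradiction, because the hole can sit anywhere in $Q$. For a centered hole, its $j$th coordinate range $\bigl(\tfrac{1-\delta}{2},\tfrac{1+\delta}{2}\bigr)$ always meets $[0,1-\delta]$. As you noticed yourself, comparing the position of $R_j$ with the hole cannot work. The lemma concerns translates of $X_\C$, so only the side lengths of $R_j$ matter; no convention placing $R_j$ flush against the hole is intended or needed. Your later ideas (shifting by sums of $\pm v_i$, reflecting, the flush case) are never turned into an argument.

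The missing idea is to enlarge the collection so that it covers the whole cube. Write $\delta_pQ=\prod_i[a_i,a_i+\delta]$ and add the open plank
\[
P=\{x:\ |x_j-a_j-\tfrac{\delta}{2}|<\tfrac{\delta}{2}\}.
\]
It has cross-normal vector $\delta e_j$ and contains $(\delta_pQ)^\circ$. Then $\C'=\C\cup\{P\}$ covers $Q$, and its Bang point set is
\[
X_{\C'}=\bigl(X_\C+\tfrac{\delta}{2}e_j\bigr)\cup\bigl(X_\C-\tfrac{\delta}{2}e_j\bigr).
\]
If $X_\C$ translates into $R_j$, then $\pi_j(X_\C)$ has length at most $1-\delta$ and every other coordinate projection has length at most $1$. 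Hence every coordinate projection of $X_{\C'}$ has length at most $1$, so $X_{\C'}$ translates into $Q$. That translate is covered by $\C'$, which contradicts Lemma~\ref{lem:bang} applied to $\C'$.

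This is the ``splitting across the hole'' you were reaching for: the two copies of $X_\C$ are separated by $\delta$ in direction $e_j$. The mechanism, however, is an auxiliary plank of width $\delta$ plus Bang's lemma for the augmented family, not a rearrangement of $X_\C$ itself.
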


\begin{proof}
    Suppose for contradiction that we can translate $X_\C$ into some $R_j$. Then the width of $X_\C$ in direction $e_j$ is at most $1-\delta$, while its width in any other coordinate direction is at most 1. 
    Let $P$ be an open plank with cross-normal vector $\dd e_j$ such that $P$ covers $(\delta Q_d)^\circ$. Then the collection $\C' := \C \cup \{P\}$ covers $Q$. Since $P$ has width $\delta$, the Bang point set $X_{\C'}$ of the extended collection is obtained as
    \[
    X_{\C'} = \left(X_\C + \frac \delta 2 e_j\right) \cup  \left(X_\C - \frac \delta 2 e_j\right).
    \]
By the above observations, the width of $X_{\C'}$ in any coordinate direction is at most~$1$.

Therefore, $X_{\C'}$ can be translated into $Q$. Because $\C'$ covers $Q$, $\C'$ must also cover a translate of $X_{\C'}$. This contradicts \Cref{lem:bang}.
\end{proof}

\begin{proof}[Proof of Theorem~\ref{thm:bezdek-smurov}]
Suppose for contradiction that there exists a collection $\C$ which covers $Q \setminus \delta Q$ and has $w(\C)< 1$. The width of the corresponding Bang point set in direction $e_j$ is given by 
   \[
   \diam(\pi_j(X_\C)) = \sum_{P \in \C}  \|\pi_j (v(P)) \|.
   \]
Since
\[
\sum_{P\in\mathcal C}\|\pi_j(v(P))\|
\le\sum_{P\in\mathcal C}\|v(P)\|
=w(\mathcal C)<1,
\]
we obtain that  $\diam(\pi_j(X_\C))$ is less than 1 for any $j \in [d]$.  By \Cref{lemma:new plank}, $X_\mathcal{C}$ cannot be translated into any $d$-cell $R_1,\dots,R_d$. In the present setting, this implies that the width of $X_\C$ in each direction $i\in [d]$ exceeds $1-\dd$:
 \[
    \sum_{P \in \C} \| \pi_i (v(P)) \| > 1 - \delta.
   \]
Summing over $i \in [d]$ we obtain 
 \begin{equation*}\label{eq:vp1}
    \sum_{P \in \C} \| v(P) \|_1 > d (1 - \delta).
   \end{equation*}
By \Cref{claim:max1norm}, 
\begin{equation*}\label{eq:vp2}
     w(\C) = \sum_{P \in \C} \| v(P) \|_2 \ge \frac{1}{\sqrt d} \sum_{P \in \C} \| v(P) \|_1> \sqrt{d} (1 - \delta).
\end{equation*}   
By our initial assumption, $w(\C) <1$. Therefore, $\delta > 1 - 1/\sqrt{d}$, proving the theorem.
\end{proof}

Since equality in \Cref{claim:max1norm} can only hold if $v$ is \emph{diagonal}, the above inequalities are tight only if all planks are diagonal. On the other hand, we will show that nearly-diagonal planks are ineffective at covering the boundary of a cube in dimensions $2$ and $3$. This is the idea behind the proof of Theorem~\ref{thm:main}.

\section{An Improved Bound in the Planar Case}
\label{sec:improved-bound-2d}
This section is dedicated to proving \Cref{thm:main} for $d=2$. Throughout the section, we work in the plane. Accordingly, $Q$ denotes the unit square, and $A_a$ stands for the centered square annulus $Q\setminus Q_a$ of thickness $a\in (0,1/2)$, as defined in \eqref{eq:Adef}. We will refer to the sets which are within $a$ of a given side of $Q$ (explicitly, $[0,1]\times [0,a],[0,1]\times [1-a,1],[0,a]\times [0,1],[1-a,1]\times [0,1]$) as the \emph{sides} of $A_a$ (note that these overlap; see \Cref{fig:annulus-l1-l2}).

We prove the statement by the following strategy. Let $\delta = 1 - 1/\sqrt{2} + \ee$ for some small $\ee >0$, and suppose for contradiction that $Q \setminus \delta Q$ can be covered by a collection $\C$ of planks of total width less than 1. \Cref{lemma:new plank} implies that most of the total plank width is carried by nearly diagonal planks. For a small positive $a>0$, consider $A_a$, which is a thin collar around the boundary of $Q$ with area $4 a - 4 a^2$. This must be covered by the union of the planks in $\C$ and the cut-out square $\delta Q$. However, using that $\C$ essentially consists of diagonal planks, its members can cover only roughly $2 \sqrt 2 a$ area  of $A_a$, while the square $\delta Q$ covers at most $2 \delta a = (2 - \sqrt{2}) a + 2 \ee a$ area of it. Altogether, at leading order, this would force
\[
4a \leq (2 + \sqrt{2}) a,
\]
a contradiction. The difficulty in making this argument precise is that even a small number of outlier planks in $\C$ that are parallel to a side of $Q$ can cover a relatively large area of $A_a$.

We will bound the area near the boundary of $Q$ that may be covered by `nearly-diagonal' planks. To that end, we first estimate the length of the intersection of a line with the collar~$A_a$.

\begin{figure}
\centering
\begin{tikzpicture}[scale=2.5]

  \coordinate (A)  at (-1,-1);
  \coordinate (B)  at ( 1,-1);
  \coordinate (C)  at ( 1, 1);
  \coordinate (D)  at (-1, 1);

  \coordinate (Ai) at (-0.4,-0.4);
  \coordinate (Bi) at ( 0.4,-0.4);
  \coordinate (Ci) at ( 0.4, 0.4);
  \coordinate (Di) at (-0.4, 0.4);

  \fill[gray!42, even odd rule]
    (A)--(B)--(C)--(D)--cycle
    (Ai)--(Bi)--(Ci)--(Di)--cycle;

  % Highlight the left side of A
  \fill[purple!70, opacity=0.5] (-1,-1) rectangle (-0.4,1);
  % Highlight the top side of A
  \fill[yellow!70, opacity=0.5] (-1,1) rectangle (1,0.4);

  \draw[thick] (A)--(B)--(C)--(D)--cycle;
  \draw[thick] (Ai)--(Bi)--(Ci)--(Di)--cycle;

  \node[anchor=south east] at (D)  {$Q$};
  \draw[<->] (-.4,-.5) -- (.4,-.5);
  \node[below] at (0,-.5) {$(1-2a)$};

  \draw[gray!80, dashed] (-1.2,-1.0) -- (1,1.2) node[above right] {$\ell$};

  \draw[line width=1.1pt,blue]
    (-1.0,-0.8) -- (-0.4,-0.2);
  \node[blue,anchor=north east] at (-0.625,-0.25) {$s_1$};

  \draw[line width=1.1pt,red]
    (0.2,0.4) -- (0.8,1.0);
  \node[red,anchor=south west] at (0.35,0.7) {$s_2$};

  \draw[<->] (0,0.4) -- (0,1.0);
  \node[left] at (0,0.7) {$a$};
  \node at (0,-0.27) {$Q_a$};

\end{tikzpicture}
\caption{The annulus $A_a$ and the line $\ell$ intersecting it in two segments, $s_1$ and $s_2$. The left \emph{side} of $A_a$ is highlighted in purple, and the top \emph{side} of $A_a$ is highlighted in yellow (their intersection is orange).}
\label{fig:annulus-l1-l2}
\end{figure}

\begin{claim}\label{claim:2}
    Suppose $\ell$ is a line in $\R^2$ of angle $\theta\in (0,\pi/2)$. %    with the $x$-axis. 
    Then the length of intersection of $\ell$ with the annulus $A_a$ is no more than
    \[L := \len(\ell\cap A_a) \leq a\left( \frac{1}{\sin\theta}+ \frac{1}{\cos\theta}\right) =2a \, \frac{\cos\theta+\sin\theta}{\sin2\theta} \,.\]
\end{claim}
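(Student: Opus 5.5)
Since $A_a$ is the union of its four sides, subadditivity of length gives $L\le\sum_{S}\len(\ell\cap S)$, the sum running over the four sides of $A_a$. This alone would only give twice the claimed bound, so the plan is instead to exploit the monotonicity of $\ell$. Because $\theta\in(0,\pi/2)$, the line $\ell$ is neither horizontal nor vertical, and by the symmetry $(x,y)\mapsto(1-x,y)$ of $A_a$ we may assume $\ell$ has positive slope $\tan\theta$. Parametrize $\ell$ by arc length, so that $x$ and $y$ increase at rates $\cos\theta$ and $\sin\theta$. The set $\ell\cap Q$ is a segment, and $\ell\cap A_a$ is this segment with the open convex set $\ell\cap Q_a$ (an open subsegment, possibly empty) removed. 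Hence $\ell\cap A_a$ consists of at most two segments $s_1$ and $s_2$, lying before and after $Q_a$ along $\ell$, as in \Cref{fig:annulus-l1-l2}.

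The key step is to show that, along the segment $s_1$, the moving point lies in $\{x\le a\}\cup\{y\le a\}$. Along $s_2$ it should similarly lie in $\{x\ge 1-a\}\cup\{y\ge 1-a\}$. For $s_1$: a point of $s_1$ lies in $A_a$, so it satisfies one of $x\le a$, $y\le a$, $x\ge1-a$, $y\ge 1-a$. If it satisfied, say, $x\ge 1-a$ while $x,y>a$, then by monotonicity every later point of $\ell$ inside $Q$ would also have $x\ge 1-a$. Then $\ell$ could not enter $Q_a$ afterwards, contradicting the definition of $s_1$. The degenerate case in which $\ell$ misses $Q_a$ entirely I would handle separately: the whole segment $\ell\cap Q$ then avoids $Q_a$, and one splits it at a suitable point, for instance where it crosses the anti-diagonal line $x+y=1$. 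I expect this to be the main fiddly point, together with making the ``before/after'' split clean.

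Given this, the length estimate is straightforward. The portion of $s_1$ with $x\le a$ has length at most $a/\cos\theta$, since $x$ increases at rate $\cos\theta$ and ranges within $[0,a]$. Likewise, the portion with $y\le a$ has length at most $a/\sin\theta$. So $\len(s_1)\le a(1/\sin\theta+1/\cos\theta)$. The corresponding bound for $s_2$ holds similarly.

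This gives $2a(1/\sin\theta+1/\cos\theta)$ in total, again off by a factor of $2$, so a sharper pairing is needed. I would combine the $x$-pieces across both segments: the set of points of $\ell\cap Q$ with $x\le a$ or $x\ge 1-a$ together has $x$-range of measure at most $2a$. Since $x$ is monotone along $\ell$, the total length of these pieces is at most $2a/\cos\theta$; the analogous argument gives $2a/\sin\theta$ for the $y$-pieces. This still totals twice the claim. So the real refinement must use that on $s_1$ the two conditions $x\le a$ and $y\le a$ overlap in time. I would show instead that $s_1$ is contained in the time interval during which $\min(x,y)\le a$, namely up to the moment $\max(\text{time of }x=a,\ \text{time of }y=a)$, starting from entry into $Q$. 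The length of $s_1$ is then at most the maximum of $a/\cos\theta$ and $a/\sin\theta$ beyond the entry point. Treating the symmetric situation for $s_2$ and summing, the target is $L\le a/\cos\theta+a/\sin\theta$; the closing identity $1/\sin\theta+1/\cos\theta=2(\cos\theta+\sin\theta)/\sin 2\theta$ is trivial.
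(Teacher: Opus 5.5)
Your setup is sound: you take positive slope, write $\ell\cap A_a=s_1\cup s_2$, and use monotonicity to place $s_1$ in $\{x\le a\}\cup\{y\le a\}$. Your split at $x+y=1$ also works when $\ell$ misses $Q_a$. The final step, however, does not close. Bounding each of $s_1,s_2$ by $\max(a/\cos\theta,\,a/\sin\theta)$ and adding gives $2a\max(1/\cos\theta,1/\sin\theta)$. This is strictly larger than $a(1/\sin\theta+1/\cos\theta)$ for every $\theta\neq\pi/4$. For small $\theta$ it is roughly $2a/\theta$ against $a/\theta+a$. So ``summing'' does not reach the target. The per-segment bound by the maximum is true but too weak.

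\textbf{Missing idea.} Bound each segment by the side of $\overline{Q_a}$ through which $\ell$ enters or leaves, not by the maximum.
\begin{itemize}
\item If $\ell$ enters $\overline{Q_a}$ through its left side $\{x=a\}$, then $x$ is increasing along $\ell$ and equals $a$ at the endpoint of $s_1$. Hence all of $s_1$ lies in $\{0\le x\le a\}$ and $\len(s_1)\le a/\cos\theta$.
\item If it enters through the bottom side, then $\len(s_1)\le a/\sin\theta$.
\item Likewise for $s_2$, using the right and top sides.
\end{itemize}

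You then need that the two sides cannot both be of the ``long'' type. Suppose both are vertical, so $\ell$ crosses the square $\overline{Q_a}$ of side $1-2a$ from left to right. Its rise $(1-2a)\tan\theta$ is then at most $1-2a$, which forces $\theta\le\pi/4$. In that case $2a/\cos\theta\le a/\cos\theta+a/\sin\theta$. The case of two horizontal sides is symmetric, and the mixed case gives the bound directly.

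This is exactly the paper's argument. The paper reduces to $\theta\ge\pi/4$ and to $\ell$ meeting $\overline{Q_a}$. It notes that $\ell\cap A_a$ lies in the two sides of $A_a$ corresponding to the entry and exit sides of $\overline{Q_a}$. Finally it observes that at least one of these sides must be horizontal.
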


\begin{proof}
By symmetry, we may suppose $\theta\ge\frac{\pi}{4}$: reflection in the line $y=x$ preserves $A_a$ and replaces $\theta$ by $\frac{\pi}{2}-\theta$, interchanging $\sin\theta$ with $\cos\theta$, so neither $L$ nor the claimed bound changes. 

We deduce how $L$ depends on $\theta$. We claim that $\ell\cap A_a$ is always contained in the union of two sides of $A_a$. Indeed, if $\ell$ does not intersect $\overline{Q}_a$, then we may translate $\ell$ inwards until it contains a corner of $\overline{Q}_a$ without decreasing $L$. 
    
Thus, we may assume that $\ell$ intersects $\overline{Q}_a$. If $\ell$ is the extension of a side of $Q_a$, then $\ell \cap Q$ is contained in the corresponding side of $A_a$. If $\ell$ is not an extension of a side of $Q_a$, then by convexity, $\ell$ intersects the boundary of $\overline{Q}_a$ in at most two points, and by selecting two sides of $\overline{Q}_a$ that contain these points, the union of the two corresponding sides $S_1,S_2$ of $A_a$ contains $\ell\cap A_a$. For $i=1,2$, let $s_i$ be the length of the segment $\ell \cap S_i$ (note that these may overlap).
    
Since $\theta\ge\frac{\pi}{4}$, the only possibility for both $S_1$ and $S_2$ being vertical is when $\ell$ is precisely diagonal, in which case we may swap one of them for a horizontal side. Thus, we can always assume that one of $S_1$ and $S_2$ is horizontal. 

The length of intersection of $\ell$ with a horizontal side is at most $\frac{a}{\sin\theta}$, and with a vertical side it is at most $\frac{a}{\cos\theta}$. Because $\theta\ge \frac{\pi}{4}$, the second quantity is larger. Hence $L$ is bounded above by
    \begin{align*}
        L & \leq s_1+s_2 \le a\left(\frac{1}{\cos\theta}+\frac{1}{\sin\theta}\right)= 2a\left(\frac{\cos\theta+\sin\theta}{\sin2\theta}\right). \label{eq:maxlen}
        \qedhere
    \end{align*}
\end{proof}

We also derive a general bound on the intersection length of $\ell$ with $A_a$ that is independent of $\theta$.

\begin{claim}\label{claim:line intersect}
For any line $\ell$,
   \[L:=\len(\ell\cap A_a)\le\frac{\sqrt{1-2a+2a^2}}{1-a}.\]
\end{claim}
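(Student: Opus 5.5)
We combine the angle-dependent estimate of \Cref{claim:2} with the trivial estimate coming from the fact that $\ell\cap A_a\subset \ell\cap Q$, and then optimize over the angle. First we rewrite the target. Since $1-2a+2a^2=(1-a)^2+a^2$,
\[
\frac{\sqrt{1-2a+2a^2}}{1-a}=\sqrt{1+\Bigl(\frac{a}{1-a}\Bigr)^2}=\frac{1}{\cos\varphi},
\qquad\text{where } \tan\varphi=\frac{a}{1-a}.
\]
Because $a<1/2$, we have $\varphi\in(0,\pi/4)$. This bound is attained by the line through $(0,0)$ and the corner $(1-a,a)$ of $Q_a$, which suggests the extremal configuration we are aiming at.

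\textbf{Reduction and easy cases.} By the reflection in $y=x$ used in the proof of \Cref{claim:2} (and the reflections in the lines $x=1/2$, $y=1/2$, which also preserve $A_a$), we may assume the angle $\theta$ of $\ell$ lies in $[0,\pi/4]$. If $\theta=0$, then $\ell\cap Q$ is a horizontal segment of length at most $1\le 1/\cos\varphi$. If $\ell$ is vertical, the reflection reduces to this case.

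\textbf{Two bounds for $\theta\in(0,\pi/4]$.}
\begin{enumerate}
\item[(i)] The chord $\ell\cap Q$ has horizontal extent at most $1$, so $L\le \len(\ell\cap Q)\le f(\theta):=1/\cos\theta$. The function $f$ is increasing on this interval.
\item[(ii)] By \Cref{claim:2}, $L\le g(\theta):=a\bigl(1/\sin\theta+1/\cos\theta\bigr)$. We check that $g$ is decreasing on $(0,\pi/4]$: indeed
\[
g'(\theta)=a\Bigl(\frac{\sin\theta}{\cos^2\theta}-\frac{\cos\theta}{\sin^2\theta}\Bigr)\le 0,
\]
since $\sin^3\theta\le\cos^3\theta$ there.
\end{enumerate}

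\textbf{Conclusion.} Hence $L\le\min(f(\theta),g(\theta))$. The minimum of an increasing and a decreasing function never exceeds their common value at the crossing point. The crossing equation $f=g$ reads $(1-a)/\cos\theta=a/\sin\theta$, that is, $\tan\theta=a/(1-a)$, so $\theta=\varphi$, which lies in $(0,\pi/4)$ as noted. At $\varphi$ the common value is $f(\varphi)=1/\cos\varphi$, which is the claimed bound.

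The only step needing care is checking the monotonicity of $g$ and that the crossing point lies in the admissible range $(0,\pi/4)$; both are elementary, so we do not expect a genuine obstacle.
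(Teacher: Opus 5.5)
Your proof is correct and is essentially the paper's argument. The paper also splits at the slope $a/(1-a)$ of the line through $(0,0)$ and $(1-a,a)$: for smaller slopes it uses the trivial chord bound $\len(\ell\cap Q)\le\sqrt{1+s^2}$, and for larger ones it uses the bound of \Cref{claim:2}, which decreases toward $\pi/4$. Your rewriting of the target as $1/\cos\varphi$ and the explicit crossing-point computation only make that argument slightly more transparent.
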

\begin{proof}
    Let $\ell_0$ be the line that connects the bottom left corner $(0,0)$ of $Q$ and the bottom right corner  $(1-a,a)$ of the inner square $\overline{Q_a}$. The slope of $\ell_0$ is $s_0 = \frac{a}{1-a}$, and a simple computation yields
    \[\len(\ell_0\cap A_a) = \frac{\sqrt{1-2a+2a^2}}{1-a}.\]
    Note that $\ell_0$ does not intersect $Q_a$.

    %FIGURE HERE?
    
    We show that no line $\ell$ has longer intersection with $A_a$ than $\ell_0$. By symmetry, we may take $\ell$ to have slope $s\in [0,1]$. Observe that for the unit square $Q$, we have $\len(\ell\cap Q)\le \sqrt{1+s^2}$. Since $Q\supset A_a$, if $\len(\ell\cap A_a)> \len(\ell_0\cap A_a)$, then the slope of $\ell$ must exceed that of $\ell_0$. We henceforth assume $s\in (\frac{a}{1-a},1]$.

    Borrowing from the proof techniques in \Cref{claim:2}, if $\theta$ is the angle between $\ell$ and the $x$-axis, then 
    \[L = \len(\ell\cap A_a)\le \frac{a}{\sin\theta}+\frac{a}{\cos\theta}.\]
    Observe that if $\theta_0$ is the angle of $\ell_0$, then $\frac{\pi}{4}\ge \theta>\theta_0$ because $1\ge s>s_0$. Since the function $\frac{1}{\sin\theta}+\frac1{\cos\theta}$ is increasing away from $\frac{\pi}{4}$ and is symmetric about $\frac{\pi}{4}$, this leads to
    \begin{align*}
        \len(\ell\cap A_a) &\le \frac{a}{\sin\theta}+\frac{a}{\cos\theta}\\
         &\le \frac{a}{\sin\theta_0}+\frac{a}{\cos\theta_0}\\
         &=  \len(\ell_0\cap A_a).
    \end{align*}
    Notice the last line holds by construction of $\ell_0$; its intersections with the sides of $A_a$ achieve the full lengths $\frac{a}{\sin\theta_0},\frac{a}{\cos\theta_0}$ respectively. We conclude that no line has longer intersection with $A_a$.
\end{proof}

\begin{lemma}\label{lem:uncovered-area}
    Let $\mathcal{C}$ be a finite collection of planar planks $P_1,\dots,P_n$ such that $\onenorm{u_i} \ge \sqrt 2-\beta$ for each $i\in\{1,\dots,n\}$ for some fixed $\beta\in (0,\sqrt 2-1)$. Then for any $a\in(0,\frac12)$, the area in the collar $A_a$ covered by the planks in $\mathcal{C}$ is upper bounded as 
    \[\area(A_a \cap \mathcal{C}) \leq 2a\left(\frac{\sqrt2 - \beta}{1 - 2\sqrt2\beta + \beta^2}\right)w(\mathcal{C}).\]
\end{lemma}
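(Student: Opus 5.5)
By subadditivity of area it suffices to treat a single plank. Namely, I will show that every plank $P$ with $\|u(P)\|_1\ge\sqrt2-\beta$ satisfies
\[
\area(A_a\cap P)\le 2a\,\frac{\sqrt2-\beta}{1-2\sqrt2\beta+\beta^2}\,w(P).
\]
Summing over $P_1,\dots,P_n$ then gives the lemma, since $\area(A_a\cap\C)\le\sum_i\area(A_a\cap P_i)$.

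\emph{Fubini step.} Fix a plank $P$ of width $w$ and let $\theta$ be its angle. Since $\|u\|_1\le\sqrt2<\sqrt2+\beta$, the angle $\theta$ is not $0$ or $\pi/2$: at those angles $\|u\|_1=1<\sqrt2-\beta$, because $\beta<\sqrt2-1$. Foliate $P$ by the lines parallel to its midline and integrate over the perpendicular coordinate. This gives
\[
\area(A_a\cap P)=\int_{-w/2}^{w/2}\len(\ell_s\cap A_a)\,ds\le w\cdot\sup_s\len(\ell_s\cap A_a).
\]
Each $\ell_s$ has angle $\theta\in(0,\pi/2)$, so Claim~\ref{claim:2} bounds the integrand by $2a\,\frac{\cos\theta+\sin\theta}{\sin2\theta}$. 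Since $u$ is perpendicular to the lines, $\|u\|_1=\cos\theta+\sin\theta$; write $c:=\cos\theta+\sin\theta$. Then $\sin2\theta=c^2-1$, and the bound becomes
\[
\area(A_a\cap P)\le 2a\,\frac{c}{c^2-1}\,w.
\]

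\emph{Monotonicity step.} The function $f(c)=c/(c^2-1)$ satisfies $f'(c)=-(c^2+1)/(c^2-1)^2<0$, so it is decreasing on $(1,\sqrt2]$. The hypothesis gives $c\ge\sqrt2-\beta>1$, hence $f(c)\le f(\sqrt2-\beta)$. Computing,
\[
(\sqrt2-\beta)^2-1=1-2\sqrt2\beta+\beta^2,
\]
which is exactly the stated bound.

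\emph{Main obstacle.} The delicate point is justifying the Fubini inequality and Claim~\ref{claim:2} uniformly, including for lines that miss $\overline{Q_a}$ or meet $A_a$ only partially. Claim~\ref{claim:2} is stated for every line of angle $\theta$, so this is covered. One should also check that the identity $\|u\|_1=\cos\theta+\sin\theta$ uses the angle of the lines rather than the angle of the normal. The two are complementary, and $\|\cdot\|_1$ is invariant under rotation by $\pi/2$ (as noted in the paper), so the identity holds either way. Open versus closed planks do not affect area.
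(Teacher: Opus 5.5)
Your proof is correct and matches the paper's argument: you bound each plank by integrating the line bound of Claim~\ref{claim:2} across its width, then reduce to the extremal value $\cos\theta+\sin\theta=\sqrt2-\beta$ by monotonicity. The only difference is cosmetic: you parametrize by $c=\|u\|_1$ and check directly that $c/(c^2-1)$ is decreasing, whereas the paper argues that the bound, viewed as a function of $\theta$, is symmetric about $\pi/4$ and increases away from it.
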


\begin{proof}
    We estimate $\area(A_a \cap \C)$ by bounding the intersection of each plank with~$A_a$. We will perform our analysis for a single plank $P$ with midline $\ell$ and angle $\theta\in [0,\frac{\pi}{2}]$. Since $u$ is orthogonal to $\ell$, $\|u\|_1 = \cos\theta+\sin\theta$, so the hypothesis on $\mathcal{C}$ reads as $\cos\theta_i+\sin\theta_i \ge \sqrt2-\beta$ for each~$i$. Let $L := \len(\ell \cap A_a)$. 
    The upper bound on $L$ depends only on $\theta$. The area of the intersection of a plank with $A_a$ is obtained by integrating $L$ over translations of $\ell$. Therefore,  Claim~\ref{claim:2} yields:
    \begin{equation}\label{eq:maxar} \area(A_a \cap P) \leq 2a\left(\frac{\cos\theta+\sin\theta}{\sin2\theta}\right)w(P).\end{equation} 
    
    As a function of $\theta$, the right side of \eqref{eq:maxar} is symmetric about $\frac{\pi}{4}$ and is increasing away from $\frac{\pi}{4}$. Let $\theta^*\in [0,\frac{\pi}{4}]$ be the smallest angle such that a plank with angle $\theta^*$ satisfies $$\|u\|_1\ge \sqrt 2-\beta.$$ It follows that $\sin\theta^*+\cos\theta^* = \sqrt{2}-\beta$. We will now use the fact that substituting $\theta^*$ for $\theta$ does not decrease the right side of \eqref{eq:maxar}. Thus, we obtain the upper bound
    \begin{align*}
        \area(A_a \cap \mathcal{C}) &\leq \sum_{i=1}^n2a\left(\frac{\cos\theta_i+\sin\theta_i}{\sin2\theta_i}\right)w(P_i)\\
        &\leq 2a\left(\frac{\cos\theta^*+\sin\theta^*}{\sin2\theta^*}\right)w(\mathcal{C})\\
        &= 2a\left(\frac{\cos\theta^*+\sin\theta^*}{(\cos\theta^*+\sin\theta^*)^2-1}\right)w(\mathcal{C})\\
        &= 2a\left(\frac{\sqrt 2-\beta}{1-2\sqrt2 \beta+\beta^2}\right)w(\mathcal{C}).
    \end{align*}
    This proves the lemma.
\end{proof}

By invoking \Cref{claim:line intersect} instead of \Cref{claim:2} in the above argument, we derive the following statement analogously. 

\begin{lemma}\label{lem:uncovered-area-general}
    Let $\mathcal{C}$ be a finite collection of planar planks. Then for any $a\in(0,\frac12)$,
    \[\area(A_a \cap \mathcal{C}) \leq  \frac{\sqrt{1-2a+2a^2}}{1-a} w(\mathcal{C}).\]
\end{lemma}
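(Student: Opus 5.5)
We follow the proof of \Cref{lem:uncovered-area} verbatim, replacing the direction-dependent line bound of \Cref{claim:2} with the uniform bound of \Cref{claim:line intersect}. Since a union of sets has area at most the sum of the areas,
\[
\area(A_a\cap\C)\le\sum_{i=1}^n\area(A_a\cap P_i).
\]
It therefore suffices to prove, for a single plank $P$ with unit normal $u$ and width $w(P)$, that
\[
\area(A_a\cap P)\le \frac{\sqrt{1-2a+2a^2}}{1-a}\,w(P).
\]

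For a single plank, apply Fubini in coordinates adapted to $P$. Take an orthonormal frame $(u,u^\perp)$. Then $P$ (open or closed, which does not affect area) is the union of the lines
\[
\ell_s=\{x:\langle x,u\rangle=s\},\qquad |s-t|<w(P)/2 .
\]
Consequently,
\[
\area(A_a\cap P)=\int_{t-w(P)/2}^{t+w(P)/2}\len(\ell_s\cap A_a)\,ds .
\]
By \Cref{claim:line intersect}, each integrand satisfies
\[
\len(\ell_s\cap A_a)\le \frac{\sqrt{1-2a+2a^2}}{1-a},
\]
a bound independent of both $s$ and the direction of $u$. Integrating over an interval of length $w(P)$ gives the single-plank inequality.

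Summing over $i=1,\dots,n$ and using $w(\C)=\sum_i w(P_i)$ yields the stated bound.

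The argument is routine. The one point to check is that \Cref{claim:line intersect} applies to every line, including horizontal and vertical ones (angle $0$ or $\pi/2$), which \Cref{claim:2} excluded. Its proof handles these cases: by symmetry it reduces to slopes $s\in[0,1]$, and for slopes $s\le a/(1-a)$ it uses only the trivial bound $\len(\ell\cap Q)\le\sqrt{1+s^2}$. So no separate treatment of axis-parallel planks is needed, and no hypothesis on $\|u_i\|_1$ is required.
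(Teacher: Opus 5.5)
Your proposal is correct and matches the paper's argument: the paper proves this lemma by rerunning the proof of Lemma~\ref{lem:uncovered-area}, which integrates $\len(\ell_s\cap A_a)$ over the translates of the midline across the plank's width and then sums over the planks. The only change is that the uniform bound of Claim~\ref{claim:line intersect} replaces Claim~\ref{claim:2}. Your check that Claim~\ref{claim:line intersect} also covers axis-parallel lines is accurate: they reduce by symmetry to slope $0$, which the trivial bound $\len(\ell\cap Q)\le\sqrt{1+s^2}$ for $s\le a/(1-a)$ handles, and this fills in a detail the paper leaves implicit.
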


We are ready to prove the main theorem in $\R^2$.

\begin{proof}[Proof of Theorem~\ref{thm:main} for $d=2$]
By \Cref{rk:open planks}, we may and do assume that the planks in the covering family are open.

Assume that for some $ 0 < \ee < 1/\sqrt2 - 1/2 $,  there exists a suitably chosen square $\delta Q \subset Q^\circ$ where $\delta := 1-1/\sqrt2+\ee < 1/2$ so that $Q \setminus \delta Q$ has a plank covering of total width less than~1. We will show that this results in a contradiction if $\eps = \eps_2$ for some $\eps_2>0$. By containment, this also yields the statement for every $\eps \in (0, \eps_2)$.

Accordingly, let $\C = \{ P_1, \ldots, P_n \}$ be a collection of open planks with $w(\C) <1$ that covers  $Q \setminus \delta Q$.     For each $i$, let  $v_i = (x_i,y_i)$ be the cross-normal vector of $P_i$, and $u_i = v_i / \|v_i \|$ be its unit normal.

Consider $X_\C$, the Bang point set of $\C$ as constructed in \Cref{lem:bang}. By construction, its width and height equal the sums of the lengths of the horizontal and vertical projections of the $v_i$, respectively; both are at most $w(\C)<1$. On the other hand, by \Cref{lemma:new plank}, each coordinate width exceeds $1-\delta$: otherwise, $X_\C$ could be translated into one of the rectangles $R_1,R_2$. Therefore, 
    \[\sum_{i=1}^n |x_i| \geq 1-\delta \quad\text{and}\quad \sum_{i=1}^n |y_i|\geq 1-\delta.\]
Add these two inequalities and observe that the left side becomes $\sum_{i=1}^n  \onenorm{v_i}$. Thus,
    \begin{equation}
        \label{eq:diff}
        \sum_{i=1}^n  \|v_i\|_1 \geq 2-2\dd = \sqrt{2}-2\ee.
    \end{equation}
Combined with $w(\C)<1$ and \Cref{claim:max1norm}, the above inequality expresses that most of the total plank width is carried by nearly diagonal planks.

Next, let $a \in (0,  1 - 1/\sqrt2)$ be a parameter whose value we will fix later. Recall that $A_a$ is a collar (i.e., centered annulus) with thickness $a$, as defined in \eqref{eq:Adef}. Note that its central hole $Q_a$ is (typically) different from the cut-out hole $\delta Q$, see \Cref{fig:collar-with-hole}. 

\begin{figure}[h]
\centering
\begin{tikzpicture}[scale=3.6]

  % Outer square Q
  \coordinate (A) at (-1,-1);
  \coordinate (B) at ( 1,-1);
  \coordinate (C) at ( 1, 1);
  \coordinate (D) at (-1, 1);

  % Centered inner square (1-2a)_o Q
  \coordinate (Ai) at (-0.42,-0.42);
  \coordinate (Bi) at ( 0.42,-0.42);
  \coordinate (Ci) at ( 0.42, 0.42);
  \coordinate (Di) at (-0.42, 0.42);

  % Shifted square delta Q
  \coordinate (Ad) at (0.22,0.08);
  \coordinate (Bd) at (0.92,0.08);
  \coordinate (Cd) at (0.92,0.78);
  \coordinate (Dd) at (0.22,0.78);

  % Fill all of Q with medium gray
  \fill[gray!64] (A)--(B)--(C)--(D)--cycle;

  % Fill centered inner square with lighter gray
  \fill[gray!22] (Ai)--(Bi)--(Ci)--(Di)--cycle;

  % Fill delta Q with white, so overlap stays white
  \fill[white] (Ad)--(Bd)--(Cd)--(Dd)--cycle;

  \fill[gray!64] (D)--(Di)--(Ci)--(C)--cycle;
  \fill[gray!64] (C)--(Ci)--(Bi)--(B)--cycle;

  \fill[gray!42] (Ad)--(Bd)--(Cd)--(Dd)--cycle;

  \fill[white] (0.22,0.08)--(0.42,0.08)--(0.42,0.42)--(0.22,0.42)--cycle;

  % -------------------------------------------------
  % Planks (clipped to Q)
  % -------------------------------------------------
  \begin{scope}
    \clip (A)--(B)--(C)--(D)--cycle;

    % Blue plank along the opposite diagonal
    \fill[blue!20, opacity=.35]
      (-1.18,1.2) -- (-0.92,1.18) -- (1.18,-0.92) -- (1.2,-1.18) -- cycle;

    \fill[green!20, opacity=.35]
      (-1.15,-0.22) rectangle (1.15,0.02);

    % Blue plank dashed boundaries
    \draw[blue!80, dashed, line width=0.8pt]
      (-1.18,1.2) -- (1.2,-1.18);
    \draw[blue!80, dashed, line width=0.8pt]
      (-0.92,1.18) -- (1.18,-0.92);

    % Horizontal plank dashed boundaries
    \draw[green!70!black, dashed, line width=0.8pt]
      (-1.15,-0.22) -- (1.15,-0.22);
    \draw[green!70!black, dashed, line width=0.8pt]
      (-1.15,0.02) -- (1.15,0.02);
  \end{scope}

  % -------------------------------------------------
  % Draw square boundaries
  % -------------------------------------------------
  \draw[thick] (A)--(B)--(C)--(D)--cycle;
  \draw[thick] (Ai)--(Bi)--(Ci)--(Di)--cycle;
  \draw[thick] (Ad)--(Bd)--(Cd)--(Dd)--cycle;

  % -------------------------------------------------
  % Labels inside diagram
  % -------------------------------------------------
  \node at (0,-0.69) {$A_a$};
  \node at (0.00,-0.34) {$Q_a$};
  \node at (0.32,0.70) {$\delta Q$};
  \node[anchor=south east] at (D) {$Q$};

\end{tikzpicture}
\caption{The collar $A_a$ overlaid on the square annulus $Q\setminus \delta Q$. Also shown is a diagonal plank (in blue) and a non-diagonal plank (in green).}
\label{fig:collar-with-hole}
\end{figure}

\noindent 
Our goal is to show, by area comparison, that for a suitable choice of $\eps$ and $a$, the annulus $A_a$ cannot be covered by the union of planks in $\C$ and the square $\delta Q$, which leads to a contradiction. 

In order to exploit the nearly diagonal structure of $\C$, we introduce yet another parameter $\alpha>1$, to be fixed later, that satisfies 
\begin{equation}\label{eq:alphaconstraint}
2 \alpha \eps < \sqrt{2} - 1.
\end{equation}
We partition the planks into two classes: diagonal ($\D$) and non-diagonal ($\N$), based on $\alpha$ and the 1-norm of their unit normal vector:
\begin{align*}
    \mathcal D &\coloneqq \{P_i : \onenorm{u_i} \geq \sqrt 2 - 2\alpha\ee\}\\
    \mathcal N &\coloneqq \{P_i : \onenorm{u_i} < \sqrt 2 - 2\alpha\ee\},
\end{align*}
so that $\C = \D \cup \N$. Let $w(\D),w(\N)$ refer to the total width of planks in $\D,\N$ respectively. Then 
\begin{equation}\label{eq: wD wN}
 w(\D) + w(\N) = w(\C) <1.
\end{equation}
As a consequence of \eqref{eq:diff}, the total width of the planks in $\N$ must be small. We express this property precisely with a quantitative estimate.
\begin{claim}\label{claim: wN}
    $w(\mathcal{N})\le \frac{1}{\alpha}$.
\end{claim}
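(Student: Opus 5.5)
We argue by a weighted averaging of \eqref{eq:diff}, splitting the $1$-norm sum according to the partition $\C=\D\cup\N$. Since $v_i = w(P_i)\,u_i$, we have $\|v_i\|_1 = w(P_i)\|u_i\|_1$. For planks in $\D$ we use the trivial bound $\|u_i\|_1\le\sqrt2$ from \Cref{claim:max1norm}. For planks in $\N$ we use the defining inequality $\|u_i\|_1 < \sqrt2-2\alpha\ee$.

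Summing these bounds gives
\begin{align*}
\sqrt2 - 2\ee \le \sum_{i=1}^n \|v_i\|_1 &\le \sqrt2\, w(\D) + (\sqrt2 - 2\alpha\ee)\, w(\N) \\
&= \sqrt2\,\bigl(w(\D)+w(\N)\bigr) - 2\alpha\ee\, w(\N).
\end{align*}
By \eqref{eq: wD wN}, $w(\D)+w(\N)<1$, so the right-hand side is less than $\sqrt2 - 2\alpha\ee\, w(\N)$.

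Comparing the two ends, we get $\sqrt2-2\ee < \sqrt2 - 2\alpha\ee\,w(\N)$. This is equivalent to $2\alpha\ee\,w(\N) < 2\ee$. Since $\ee>0$ and $\alpha>1>0$, dividing yields $w(\N) < 1/\alpha$, which in particular gives the claimed bound $w(\N)\le 1/\alpha$.

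There is no real obstacle here. The only point requiring care is to use the strict inequality $w(\C)<1$ together with the positivity of the coefficient $\sqrt 2$ multiplying $w(\D)+w(\N)$. The constraint \eqref{eq:alphaconstraint} is not needed for this claim. It only ensures that the threshold $\sqrt2-2\alpha\ee$ exceeds $1$, so that the class $\D$ is meaningful and \Cref{lem:uncovered-area} applies later with $\beta=2\alpha\ee$.
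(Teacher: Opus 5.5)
Your proof is correct and follows the paper's route: you split $\sum_i w(P_i)\|u_i\|_1$ over $\D$ and $\N$, bound the $\D$-terms by $\sqrt2$ and the $\N$-terms by the threshold $\sqrt2-2\alpha\ee$, and compare with \eqref{eq:diff} using $w(\C)<1$. Your handling of strictness also yields the slightly sharper $w(\N)<1/\alpha$, which the paper's chain of non-strict inequalities does not record.
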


\begin{proof}
Using \eqref{eq: wD wN}, \Cref{claim:max1norm},  and the definition of $\mathcal N$,
    \begin{align*}
    \sqrt 2-w(\mathcal{N}) 2\alpha\ee &\geq  w(\C)\sqrt 2 - w(\mathcal N) 2\alpha \ee\\
    &= w(\mathcal{D})\sqrt 2 + w(\mathcal{N})(\sqrt{2}-2\alpha\ee)\\
    &\ge \sum_{i=1}^n w(P_i) \|u_i\|_1 \\
    &\ge \sqrt 2 -2\ee
    \end{align*}
    The last line follows from \eqref{eq:diff}. This readily implies that $w(\mathcal{N})\le \frac{1}{\alpha}$.
\end{proof}

In order to show that $A_a$ cannot be covered, we produce estimates for $\area(A_a \cap \mathcal{D})$, $\area(A_a \cap \mathcal{N})$ and $\area(A_a \cap \delta Q)$ separately.  
 
 First, applying \Cref{lem:uncovered-area} with $\beta = 2\alpha\ee$ (which is permitted by \eqref{eq:alphaconstraint}) leads directly to the bound
\begin{equation}\label{eq:areaD}
    \area(A_a \cap \mathcal{D}) \le \frac{2a(\sqrt2 - 2\alpha\ee)}{1 - 4\sqrt2\alpha\ee + 4\alpha^2\ee^2} w(\mathcal{D}) \leq 2 a \left( \sqrt{2} + \frac {13} 2 \alpha \ee \right)  w(\mathcal{D})
\end{equation}
which holds whenever 
\begin{equation}\label{eq:ae_criterion}
    2 \alpha \eps \leq 0.03,
\end{equation}
since for $0 \leq x \leq 0.03$ we have 
\[
\frac{\sqrt2 - x}{1 - 2\sqrt2 x + x^2} \leq \sqrt{2} + \frac{13}{4}x.
\]
For shorthand, we will write $\lambda = \sqrt{2} + \frac {13} 2 \alpha \ee$, thus $\area(A_a \cap \mathcal{D}) \le 2 a \lambda w(\mathcal{D})$. 

Second, \Cref{lem:uncovered-area-general} implies that
\begin{equation}\label{eq:areaN}
    \area(A_a \cap \mathcal{N}) \le \frac{\sqrt{1-2a+2a^2}}{1-a}w(\mathcal{N}) \leq (1 + a^2) w(\mathcal{N})
\end{equation}
since the condition  $a <1 - 1/\sqrt{2} < 1/\sqrt{2}$ guarantees that 
\[
\frac{\sqrt{1-2a+2a^2}}{1-a} = \sqrt{1 + \frac{a^2}{(1-a)^2}} \leq 1 + \frac{a^2}{2(1-a)^2} \leq 1 + a^2.
\]

Finally, observe that since  $a < 1 - 1/\sqrt{2} < \dd$, the overlap between $A_a$ and the square $\delta Q$ is bounded above by the corner-position value. 
Accordingly,
\begin{equation}\label{eq:areaQ}
  \area(A_a \cap \delta Q) \le \delta^2-(\delta-a)^2 = 2a\delta - a^2.
\end{equation}

When finding suitable values for $\eps, a$ and $\alpha$, we will also require that
\begin{equation}\label{eq:aepsineq}
 2 a \lambda  <  1+a^2.
\end{equation}

Combining the area bounds \eqref{eq:areaD}, \eqref{eq:areaN} and \eqref{eq:areaQ} with \eqref{eq: wD wN}, \eqref{eq:aepsineq} and \Cref{claim: wN}, we derive that the total area in $A_a$ covered by $\C \cup \delta Q$ can be estimated from above as 
\begin{align*}
\area&(A_a \cap (\C \cup \delta Q)) \\
&\leq \area(A_a \cap \D) +  \area(A_a \cap \N) +  \area(A_a \cap \delta Q) 
\\ 
&\leq  2 a\lambda   w(\mathcal{D}) + (1+a^2)w(\mathcal{N}) +  2a\delta - a^2 \\
&<  2 a\lambda   (1 - w(\mathcal{N})) + (1+a^2)w(\mathcal{N}) +  2a\delta - a^2 \\
&= 2 a\lambda  + \left((1+a^2) -  2 a\lambda  \right) w(\mathcal{N}) +  2a\delta - a^2\\
&\leq 2 a\lambda  + \left((1+a^2) -  2 a\lambda  \right) \cdot \frac 1 \alpha+  2a\delta - a^2\\
& = 
(2+\sqrt{2})a-a^2
+\frac{1+a^2-2\sqrt{2}\,a}{\alpha}
+a(13\alpha-11)\ee.
\end{align*}
Note that 
\[
\area(A_a) = 1 - (1-2a)^2 = 4a-4a^2.
\]
Therefore, the area of $A_a$ left uncovered by $\mathcal C\cup\delta Q$ is greater than
\begin{align*}
U(\ee, a, \alpha) &:= 4a-4a^2-\left((2+\sqrt{2})a-a^2
+\frac{1+a^2-2\sqrt{2}\,a}{\alpha}
+a(13\alpha-11)\ee \right) \\
&=
(2-\sqrt{2})a-3a^2
+\frac{2\sqrt{2}\,a-1-a^2}{\alpha}
-a(13\alpha-11)\ee.
\end{align*}

In order to complete the proof, it suffices to find a specific triple $(\ee, a, \alpha)$ with $\eps \in (0,  1/\sqrt2 - 1/2)$, $a \in (0,  1 - 1/\sqrt2)$, and $\alpha >1 $ satisfying \eqref{eq:ae_criterion} and \eqref{eq:aepsineq}, such that
\[U(\ee, a,\alpha)>0.\]

Note that $U(\ee, a, \alpha)$ is linear in $\eps$, whose coefficient is negative.  Thus $U(\ee,a,\alpha)$ is strictly decreasing in $\ee$.
Consequently, for fixed $a$ and $\alpha$, the condition $U(\ee,a,\alpha)>0$
is equivalent to
\[
\ee<E(a,\alpha),
\]
where
\begin{align*}
E(a,\alpha)
&:=
\frac{
(2-\sqrt{2})a-3a^2+
\frac{2\sqrt{2}\,a-1-a^2}{\alpha}
}{
a(13\alpha-11)
} \\
&=
\frac{1}{13\alpha-11}
\left(
2-\sqrt{2}-3a+
\frac{2\sqrt{2}-a-1/a } {\alpha}
\right).
\end{align*}

In order to find a suitable $\eps$, we maximize $E(a, \alpha)$ subject to the constraints  $a \in (0,  1 - 1/\sqrt2)$, and $\alpha >1 $ and then check that the solutions also satisfy \eqref{eq:ae_criterion} and \eqref{eq:aepsineq}. 
For fixed $\alpha$, differentiating $E(a, \alpha)$ with respect to $a$ yields that the 
unconstrained maximum over \(a>0\) is attained at
\[
a=\frac{1}{\sqrt{3\alpha+1}}.
\]
Note that for $\alpha > (5 + 4 \sqrt{2})/3 \approx 3.552 $, this maximizing value lies in the required interval
$(0,1-1/\sqrt2)$.
Substituting this value back in $E(a, \alpha)$, the function to be maximized becomes
\[
\varphi(s)
=
\frac{
3\bigl((2-\sqrt{2})(s^2-1)+6(\sqrt{2}-s)\bigr)
}{
(s^2-1)(13s^2-46)
}
\]
where $s = \sqrt{3 \alpha +1}$. A direct computation shows that $\varphi(s)>0$ for $s >  (1 + 1/ \sqrt{2})(3 +\sqrt{27 - 16 \sqrt{2}}) \approx 8.691$, and its relevant critical point numerically is $
s_*\approx13.217911$ that corresponds to $
\alpha_*\approx57.904393$, $a_*\approx0.075655$, and $\varphi(s_*) = E(a_*,\alpha_*) \approx2.400925\times10^{-4}$.

For a convenient explicit choice, take
\[
\alpha=58,\qquad
a=0.0756=\frac{189}{2500},
\qquad
\ee=2.4\times10^{-4}=\frac{3}{12500}.
\]
Then
\[
2\alpha\ee=0.02784<0.03,
\qquad
2a\left(\sqrt2+\frac{13}{2}\alpha\ee\right)
\approx0.22751<1+a^2,
\]
so both \eqref{eq:ae_criterion} and \eqref{eq:aepsineq} hold. A direct substitution gives
\[
U\left(\frac{3}{12500},\frac{189}{2500},58\right)
=
\frac{6452063}{62500000}
-\frac{1323}{18125}\sqrt{2} >0.
\]
Therefore, the argument proves \Cref{thm:main} for $d=2$ with
\[
\ee_2=2.4\times10^{-4}.
\qedhere
\]
\end{proof}

\section{An Improved Bound in $\R^3$}\label{sec:improved-bound 3d}

In this section, we prove \Cref{thm:main} for $d=3$. The argument follows the same structure as in the planar case. We again divide the covering planks into nearly-diagonal and non-diagonal classes, and estimate the volume of a thin centered collar covered by these two families and by the hole $\delta Q$, which will lead to a contradiction for sufficiently small $\eps$. The corresponding volume estimates will be carried out via a reduction to two-dimensional coordinate slices.

Throughout the section, we work with $d=3$. Consequently, $Q$ and $A_a$ denote the $3$-dimensional cube and collar, respectively.

Similarly to the planar case, `diagonality' of a vector $v$ is measured by the proportion $\| v\|_1 / \| v \|_2$. A nearly-diagonal vector must have coordinates whose absolute values are close to each other. In order to quantify this property, we prove a stability version of \Cref{claim:max1norm}.

\begin{claim}\label{claim:min and max}
Assume that $u = (u_x,u_y,u_z) \in \R^3$ is a unit vector that satisfies $\|u\|_1\ge \sqrt3 - \beta$ with some $\beta\in [0,\sqrt 3 -\sqrt 2)$. Then 
    \begin{align*}
        \max \{|u_x|,|u_y|,|u_z|\} &\le \frac{1}{\sqrt 3}-\frac{\beta}{3}+ \frac{\sqrt{4\sqrt3 \beta -2\beta^2}}{3}:= M(\beta).
    \end{align*}
\end{claim}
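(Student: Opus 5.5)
Without loss of generality let $|u_x|=m$ be the largest absolute coordinate, and write $p=|u_y|$, $q=|u_z|$. The constraints are then
\[
m^2+p^2+q^2=1, \qquad m+p+q\ge \sqrt3-\beta .
\]
The goal is to maximize $m$ subject to these. For fixed $m$, we have $p^2+q^2=1-m^2$, and by Cauchy--Schwarz (\Cref{claim:max1norm} in dimension 2),
\[
p+q\le \sqrt{2(1-m^2)} .
\]
So any feasible $m$ must satisfy
\[
m+\sqrt{2(1-m^2)}\ge \sqrt3-\beta .
\]
This reduces the problem to a one-variable inequality.

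Next, study $g(m)=m+\sqrt{2(1-m^2)}$ on $[1/\sqrt3,1]$. It is the $\ell_1$ value when the other two coordinates are equal. We have $g(1/\sqrt3)=\sqrt3$, which is its maximum, since $g'(m)=1-\tfrac{2m}{\sqrt{2(1-m^2)}}$ vanishes at $m=1/\sqrt3$. Moreover $g$ is strictly decreasing on $[1/\sqrt3,1]$, with $g(1)=1$. Thus the feasible $m$ form an interval $[1/\sqrt3, m^*]$, where $m^*$ is the largest root of
\[
m+\sqrt{2(1-m^2)}=\sqrt3-\beta .
\]
Set $c=\sqrt3-\beta$. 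Isolating the radical and squaring gives
\[
2(1-m^2)=(c-m)^2, \quad\text{i.e.}\quad 3m^2-2cm+c^2-2=0,
\]
so
\[
m=\frac{c\pm\sqrt{6-2c^2}}{3}.
\]
Substituting $c^2=3-2\sqrt3\beta+\beta^2$ gives $6-2c^2=4\sqrt3\beta-2\beta^2$. The larger root is therefore
\[
\frac{1}{\sqrt3}-\frac{\beta}{3}+\frac{\sqrt{4\sqrt3\beta-2\beta^2}}{3}=M(\beta).
\]
Since $g$ is decreasing past $1/\sqrt3$, every feasible $m$ satisfies $m\le M(\beta)$.

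The main care point is the squaring step. We must check that the larger root is genuine and not spurious, i.e. that $c-m^*\ge 0$, and that it lies in $[1/\sqrt3,1]$ so the monotonicity argument applies. This is where the hypothesis $\beta<\sqrt3-\sqrt2$ enters: it ensures $c>\sqrt2$.
\begin{itemize}
\item Since $g(1)=1<\sqrt2<c\le\sqrt3=g(1/\sqrt3)$, the intermediate value theorem gives a root in $[1/\sqrt3,1)$. It must be the larger of the two quadratic roots, because the smaller one is $\le 1/\sqrt3$.
\item $6-2c^2\ge 0$ holds since $c\le\sqrt3$.
\item $c-m^*\ge 0$ follows from $m^*\le c$, which holds because $c>\sqrt2>1\ge m^*$.
\end{itemize}
I would verify these briefly and then conclude.
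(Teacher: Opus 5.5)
Your proof is correct and follows the same route as the paper: you apply Cauchy--Schwarz to the two remaining coordinates to get $m+\sqrt{2(1-m^2)}\ge c$ with $c=\sqrt3-\beta$, and then read the bound off the quadratic $3m^2-2cm+c^2-2$. The paper squares this inequality directly, which is legitimate since $m\le 1<c$, and obtains $3m^2-2cm+c^2-2\le 0$ at once, so your monotonicity and intermediate-value analysis of $g$ is valid but not needed.
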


\begin{proof}
Let $s=\sqrt3-\beta>1$ and $x=|u_x|$. By the Cauchy--Schwarz inequality,
\[
s\le x+|u_y|+|u_z|
\le x+\sqrt{2(1-x^2)}.
\]
Since $x\le1<s$, squaring gives
\[
3x^2-2sx+s^2-2\le0.
\]
Consequently,
\[
x\le\frac{s+\sqrt{6-2s^2}}{3}
=\frac{\sqrt3-\beta+\sqrt{4\sqrt3\beta-2\beta^2}}{3}
=M(\beta).
\]
By symmetry, the same bound holds for $|u_y|$ and $|u_z|$.
\end{proof}
\noindent
We note that $M(\beta)$ is increasing for $\beta\in [0,\sqrt 3 -\sqrt 2)$ with $M(0) = 1/\sqrt{3}$.

We will estimate the volume of a collar covered by a given plank by integrating the area of the corresponding two-dimensional coordinate slices. Let $P$ be a 3-dimensional plank of the form
\[
P = \Big \{(x,y,z) \in \R^3: \ |\langle (x,y,z), u \rangle - t| \leq \frac w 2  \Big \}
\]
for a unit normal vector $u = (u_x, u_y, u_z) \in \R^3$, width $w >0$, and shift $t \in \R$. Let $H \subset \R^3$ be a translate of a coordinate plane, that is, a plane orthogonal to a standard basis vector. Without loss of generality, assume that $H$ is perpendicular to the $z$-axis. Suppose that $u$ is not parallel to the $z$-axis, that is, $u_z^2 <1$. Then $H \cap P$ is a 2-dimensional plank in $H$. To calculate its normal vector and width, assume that $H = \{(x,y,z_0) \}$ for $x,y \in \R$ and a fixed $z_0 \in \R$. Then 
\[
H \cap P = \Big \{(x,y,z_0) \in \R^3: \ |\langle (x,y,z_0), u \rangle - t| \leq \frac w 2  \Big \}.
\]
Identifying $H$ with $\R^2$, we obtain
\begin{align*}
H \cap P&= \Big \{(x,y) \in \R^2: \ |\langle (x,y), (u_x, u_y) \rangle - ( t - z_0 \cdot u_z)| \leq \frac w 2  \Big \} .
\end{align*}
Thus, $H \cap P$ is a 2-dimensional plank with unit normal 
\begin{equation}\label{eq:uhp0}
u(H \cap P) = \frac{(u_x, u_y)}{(u_x^2+ u_y^2)^{1/2}} = \frac{(u_x, u_y)}{(1-u_z^2)^{1/2}}
\end{equation}
and width 
\begin{equation}\label{eq:whp0}
w(H \cap P) = \frac w {(u_x^2+ u_y^2)^{1/2}} = \frac{w}{(1-u_z^2)^{1/2}}\,.
\end{equation}

Combining the above formulae with the estimates of \Cref{claim:min and max}, we readily obtain the following bounds that show that nearly-diagonal $3$-dimensional planks descend to nearly-diagonal $2$-dimensional planks. 

\begin{lemma}\label{lem:diagonal after intersection}
     Let $P$ be a $3$-dimensional plank of width $w$ and unit normal $u$ that satisfies $\|u\|_1\ge \sqrt{3}-\beta$ with some $\beta\in (0,\sqrt 3-\sqrt 2)$. Let $H$ be a translate of a coordinate plane. Then $H\cap P$ is a $2$-dimensional plank with unit normal $u(H \cap P) \in \R^2$ and width $w(H\cap P) $ that satisfy
     \begin{equation}\label{eq:uhp}
\|u(H\cap P)\|_1\ge \sqrt 2  - \frac 3 2 \beta
     \end{equation}
     and 
\begin{equation}\label{eq:whp}
w(H\cap P) \le \frac{w}{\sqrt{1-M(\beta)^2}}.
\end{equation}
 \end{lemma}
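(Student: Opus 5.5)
The plan is to work entirely from the slice formulas \eqref{eq:uhp0} and \eqref{eq:whp0}. Permuting coordinates preserves both $\|u\|_1$ and the hypothesis, so we may assume $H=\{z=z_0\}$. Write $s=\sqrt3-\beta$; the assumption $\beta<\sqrt3-\sqrt2$ gives $s>\sqrt2$. By \Cref{claim:min and max}, $|u_z|\le M(\beta)$. We also need $M(\beta)<1$ on the given range, which I would check from the closed form, or more simply by noting that $|u_z|=1$ would force $\|u\|_1=1<s$. Hence $u_z^2<1$, so $H\cap P$ is a genuine $2$-dimensional plank. Then \eqref{eq:whp0} gives
\[
w(H\cap P)=\frac{w}{\sqrt{1-u_z^2}}\le \frac{w}{\sqrt{1-M(\beta)^2}},
\]
which is \eqref{eq:whp}.

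For \eqref{eq:uhp}, use \eqref{eq:uhp0} and put $z=|u_z|\in[0,1)$:
\[
\|u(H\cap P)\|_1=\frac{|u_x|+|u_y|}{\sqrt{1-z^2}}\ge \frac{s-z}{\sqrt{1-z^2}}=:f(z),
\]
since $|u_x|+|u_y|\ge s-z$. I will minimize $f$ over all of $[0,1)$ rather than only over the feasible range of $z$, which avoids any case analysis. A direct differentiation gives that $f'(z)$ has the sign of $sz-1$. So $f$ decreases up to $z=1/s\in(0,1)$ and increases afterwards. Its minimum value is
\[
f(1/s)=\frac{(s^2-1)/s}{\sqrt{s^2-1}/s}=\sqrt{s^2-1}=\sqrt{2-2\sqrt3\beta+\beta^2}.
\]

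It remains to show $\sqrt{2-2\sqrt3\beta+\beta^2}\ge \sqrt2-\tfrac32\beta$. The right side is positive for $\beta<\sqrt3-\sqrt2$, so we can square both sides. Squaring reduces the inequality to
\[
(3\sqrt2-2\sqrt3)\,\beta\ge \tfrac54\beta^2,
\]
that is, $\beta\le \tfrac45(3\sqrt2-2\sqrt3)\approx 0.62$. This holds because $\beta<\sqrt3-\sqrt2\approx0.32$.

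I expect the only delicate point to be the lower bound on $\|u(H\cap P)\|_1$. The renormalization by $\sqrt{1-u_z^2}$ competes with the loss $|u_z|$ in the numerator, and the single-variable minimization above is what resolves this. The constant $\tfrac32$ in \eqref{eq:uhp} is not sharp; any constant exceeding the first-order coefficient $\sqrt{3/2}$ of $\sqrt2-\sqrt{s^2-1}$ would work for small $\beta$, and $\tfrac32$ leaves room for the whole range of $\beta$.
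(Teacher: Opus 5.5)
Your proposal is correct and follows essentially the same route as the paper: you reduce to $H=\{z=z_0\}$, use $|u_z|\le M(\beta)$ in \eqref{eq:whp0} for the width, and minimize $(s-z)/\sqrt{1-z^2}$ at $z=1/s$ to get $\sqrt{2-2\sqrt3\beta+\beta^2}$; the only difference is that you compare this with $\sqrt2-\frac32\beta$ by squaring, whereas the paper uses concavity in $\beta$ and the chord through $(0,\sqrt2)$ and $(\sqrt3-\sqrt2,1)$. One small slip: the observation that $|u_z|=1$ would force $\|u\|_1=1$ gives $u_z^2<1$ but not $M(\beta)<1$, which instead follows from the closed form (e.g.\ $M(\beta)<2\sqrt2/3$ on this range), as you indicated.
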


\begin{proof}
Without loss of generality, we may assume that $H$ is perpendicular to the $z$-axis. 
To obtain \eqref{eq:uhp}, note that 
\[
|u_x| + |u_y| \geq \sqrt{3}-\beta - |u_z|.
\]
Therefore, by \eqref{eq:uhp0},
\[
\| u(H \cap P )\|_1 \geq \frac{\sqrt{3}-\beta - |u_z|}{(1-u_z^2)^{1/2}}.
\]
Since $\beta < \sqrt{3}-1$, the function
\[
\psi(s) = \frac{\sqrt{3}-\beta - s}{ (1 - s^2)^{1/2}}
\]
attains its minimum on $(0,1)$ at $s = 1/ (\sqrt{3} - \beta)$ with minimum value
\[
\sqrt{(\sqrt{3}- \beta)^2 -1} = \sqrt{2 - 2 \sqrt{3} \beta + \beta^2}.
\]
Note that the function on the right hand side is strictly concave. Therefore, on the admissible interval $\beta \in (0, \sqrt{3} - \sqrt{2})$ it is lower bounded by the chord connecting the extreme points of its graph: 
\[
 \sqrt{2 - 2 \sqrt{3} \beta + \beta^2}  > \sqrt{2} - \frac{1 - \sqrt{2}}{\sqrt{2} - \sqrt{3}} \beta >  \sqrt{2} - \frac 3 2 \beta, 
\]
which completes the proof of \eqref{eq:uhp}.

The estimate \eqref{eq:whp} follows readily from  \eqref{eq:whp0} and the fact $|u_z| \leq M(\beta)$ provided by~\Cref{claim:min and max}.
\end{proof}

Next, we derive the 3-dimensional analogue of~\Cref{lem:uncovered-area}. Just as in the planar case, we fix a thickness parameter $0 < a < 1/2$ and consider the centered annulus $A_a$ defined in \eqref{eq:Adef}. We wish to control the volume of $A_a$ covered by a collection of nearly diagonal planks. For $\beta \in [0, \frac 2 3 (\sqrt2-1))$, define 
\[
D(\beta):=\left(\frac{\sqrt2 - \frac 3 2 \beta}{1 - 3\sqrt2\beta + \frac 9 4 \beta^2}\right) \cdot \frac 1 {\sqrt{1-M(\beta)^2}}.
\]
For later reference, we note that $D(\beta)$ is continuous and increasing throughout its domain (as is seen by analyzing its factors) and
\begin{equation}\label{eq:betazero}
    D(0)=\sqrt3.
\end{equation}

\begin{lemma}\label{lemma:uncovered-volume}
    Let $\mathcal{C}$ be a collection of 3-dimensional planks $P_1,\dots,P_n$ such that $\onenorm{u_i} \ge \sqrt 3-\beta$ for each $i\in\{1,\dots,n\}$ for some fixed $\beta\in (0,\frac 2 3 (\sqrt2-1))$. Then for any $a\in(0,\frac12)$, the volume in the 3-collar $A_a$ covered by the planks in $\mathcal{C}$ is upper bounded as 
    \[
     \vol(A_a\cap \mathcal{C}) \le  3a \, D(\beta) w(\mathcal{C}).
    \]
\end{lemma}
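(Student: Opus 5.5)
The plan is to reduce the volume estimate to the planar estimate of \Cref{lem:uncovered-area} on horizontal slices, after decomposing the 3-collar $A_a$ into pieces on which each slice looks like a planar collar. Write $A_a = S_1 \cup S_2 \cup S_3 \cup T$. Here $T = \{(x,y,z)\in Q: z\in[0,a]\cup[1-a,1]\}$ is the union of the top and bottom slabs. The middle part $\{z\in(a,1-a)\}\cap A_a$ is, for each fixed $z_0\in(a,1-a)$, exactly the planar collar $A_a$ (identified in the plane $H_{z_0}=\{z=z_0\}$). Rather than treat the three coordinate directions asymmetrically, I will use a symmetric covering. For $j\in[3]$, let $B_j = \{x\in Q: x_j \in [0,a]\cup[1-a,1]\}$ be the pair of opposite slabs perpendicular to $e_j$. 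Then $A_a=B_1\cup B_2\cup B_3$.

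The slicing is chosen so that each slab contributes measure $a$. For each pair $\{i,k\}\subset[3]$, the set $B_i\cup B_k$ sliced by a plane $H$ perpendicular to the third axis $e_j$ is precisely the planar collar $A_a$ in $H\cap Q$. Hence for each $j$, integrating over $H_{j,t}=\{x_j=t\}$, $t\in[0,1]$, gives
\[
\vol\big((B_i\cup B_k)\cap \C\big)=\int_0^1 \area\big(H_{j,t}\cap A_a\cap \C\big)\,dt .
\]
For the slice, each $H_{j,t}\cap P_m$ is a planar plank. By \Cref{lem:diagonal after intersection}, it has $\|u\|_1\ge \sqrt2-\tfrac32\beta$ and width at most $w(P_m)/\sqrt{1-M(\beta)^2}$. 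The hypothesis $\beta<\tfrac23(\sqrt2-1)$ is exactly what makes $\tfrac32\beta<\sqrt2-1$, so \Cref{lem:uncovered-area} applies with parameter $\tfrac32\beta$. This gives
\[
\area(H_{j,t}\cap A_a\cap\C)\le 2a\cdot\frac{\sqrt2-\frac32\beta}{1-3\sqrt2\beta+\frac94\beta^2}\cdot\frac{w(\C)}{\sqrt{1-M(\beta)^2}} = 2a\,D(\beta)\,w(\C),
\]
and integrating over $t\in[0,1]$ yields $\vol((B_i\cup B_k)\cap\C)\le 2aD(\beta)w(\C)$.

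The final step is the symmetrization. Every point of $A_a$ lies in at least one $B_j$, and hence in at least two of the three sets $B_1\cup B_2$, $B_1\cup B_3$, $B_2\cup B_3$. Therefore
\[
2\,\vol(A_a\cap\C)\le\sum_{\{i,k\}}\vol((B_i\cup B_k)\cap\C)\le 3\cdot 2aD(\beta)w(\C),
\]
which gives $\vol(A_a\cap\C)\le 3aD(\beta)w(\C)$, as claimed. The constant $3a$ comes out exactly right, which is reassuring that this double-counting is the intended mechanism.

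The main point needing care is the slicing step. One must check that $H_{j,t}\cap(B_i\cup B_k)$ is genuinely the planar collar of thickness $a$ inside the unit square. One must also check that \Cref{lem:uncovered-area}, stated for a full collection of planks, applies verbatim to the sliced collection, including planks whose slices have widths inflated by the factor $1/\sqrt{1-M(\beta)^2}$. Since the lemma's bound is linear in total width, this should be immediate. Finally, $\beta<\sqrt3-\sqrt2$ is needed for \Cref{claim:min and max}; it holds since $\tfrac23(\sqrt2-1)<\sqrt3-\sqrt2$. Measurability and Fubini are routine, since everything is a finite union of polytopes.
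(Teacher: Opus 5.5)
Your argument is correct and is essentially the paper's proof: your sets $B_i\cup B_k$ are exactly the paper's $xy$-, $xz$- and $yz$-tubes (e.g.\ $B_1\cup B_2=A_{xy}\times[0,1]$), each tube is bounded slice by slice via \Cref{lem:diagonal after intersection} and \Cref{lem:uncovered-area} with parameter $\tfrac32\beta$, and the factor $\tfrac12$ comes from the same observation that every point of $A_a$ lies in at least two of the three tubes. The decomposition $A_a=S_1\cup S_2\cup S_3\cup T$ in your first paragraph is never used, and the $S_j$ are never defined, so that sentence can simply be deleted.
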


\begin{proof}
   We reduce the problem to the planar setting.  Let $A_{xy}= [0,1]^2 \setminus (a,1-a)^2$ be  the relevant annulus in the $xy$-plane that arises as the intersection of $A_a$ with the horizontal plane at height $1/2$. We will first estimate the volume of the vertical tube $A_{xy}\times[0,1]$ covered by members of $\mathcal{C}$, see~\Cref{fig:xy-slice-cubic-annulus-plank}. We give a bound by integrating the covered area at height $z$ for $z \in [0,1]$. 
   
   Accordingly, fix any height $z\in [0,1]$, and let $H$ be the $xy$-plane at height $z$. Then $H \cap (A_{xy}\times[0,1]) = A_{xy}\times \{z\}$. Lemma~\ref{lem:diagonal after intersection} shows that for each plank $P \in \mathcal{C}$, the intersection $H \cap P$ is a planar plank of width
   \[
   w(H\cap P)\le \frac{w(P)}{\sqrt{1-M(\beta)^2}}
   \]
   and normal vector $u(H\cap P)$ that satisfies
   \[
   \|u(H\cap P)\|_1\ge \sqrt 2  - \frac 3 2 \beta.
   \]
Therefore, by \Cref{lem:uncovered-area},
    \begin{align*}
        \area\Big[(A_{xy}\times \{z\})\cap \mathcal{C}\Big] &\le 2a \left(\frac{\sqrt2 - \frac 3 2 \beta}{1 - 3\sqrt2\beta + \frac 9 4 \beta^2}\right) \cdot \frac 1 {\sqrt{1-M(\beta)^2}} \cdot w(\mathcal{C}) \\
        &= 2 a \, D(\beta)w(\mathcal{C}).
    \end{align*}
By integrating with respect to the height of the section, we derive that the volume of $A_{xy}\times[0,1]$ covered by members of $\mathcal{C}$ is at most 
 \begin{align*}
        I_{xy} &= \int_0^1 \area[(A_{xy}\times \{z\})\cap \mathcal{C}] \ dz\\
        &\le 2 a \, D(\beta) w(\mathcal{C}).
    \end{align*}
By symmetry, the same estimate holds for the volumes $I_{xz}$ and $I_{yz}$ covered by members of $\mathcal{C}$ in the analogous $xz$- and $yz$-tubes, respectively. Finally, notice that  each point of $A_a$ is covered at least twice by these three tubes. We may now bound $\vol(A_a\cap \mathcal{C})$ as
    \begin{align*}
    \vol(A_a\cap \C) &\le \frac12\Big[I_{xy}+I_{xz}+I_{yz}\Big]\\
    &\le 3 a \, D(\beta) w(\mathcal{C}).
    \end{align*}
    This proves the lemma.
\end{proof}

\begin{figure}[H]
\centering
\begin{tikzpicture}[
    scale=2,
    x={(1cm,0cm)},
    y={(0.75cm,0.32cm)},
    z={(0cm,1cm)}
]

%-----------------------------------
% Parameters
%-----------------------------------
\def\B{2}
\def\C{0.5}
\def\D{1.5}
\def\Z{1.1} % slicing height

%===================================
% Coordinates for diagonal plank P
% bounded by x+y+z = 2.2 and x+y+z = 2.8
% so cross normal is parallel to (1,1,1)
%===================================

% Hexagon on plane x+y+z = 2.2
\coordinate (A1) at (0,0.2,2);
\coordinate (B1) at (0.2,0,2);
\coordinate (C1) at (2,0,0.2);
\coordinate (D1) at (2,0.2,0);
\coordinate (E1) at (0.2,2,0);
\coordinate (F1) at (0,2,0.2);

% Hexagon on plane x+y+z = 2.8
\coordinate (A2) at (0,0.8,2);
\coordinate (B2) at (0.8,0,2);
\coordinate (C2) at (2,0,0.8);
\coordinate (D2) at (2,0.8,0);
\coordinate (E2) at (0.8,2,0);
\coordinate (F2) at (0,2,0.8);

%-----------------------------------
% Outer cube
%-----------------------------------
\draw (0,0,0) -- (\B,0,0) -- (\B,\B,0) -- (0,\B,0) -- cycle;
\draw (0,0,\B) -- (\B,0,\B) -- (\B,\B,\B) -- (0,\B,\B) -- cycle;
\draw (0,0,0) -- (0,0,\B);
\draw (\B,0,0) -- (\B,0,\B);
\draw (\B,\B,0) -- (\B,\B,\B);
\draw (0,\B,0) -- (0,\B,\B);

%-----------------------------------
% Inner cube [a,1-a]^3
%-----------------------------------
\draw[dashed] (\C,\C,0) -- (\D,\C,0) -- (\D,\D,0) -- (\C,\D,0) -- cycle;
\draw[dashed] (\C,\C,\B) -- (\D,\C,\B) -- (\D,\D,\B) -- (\C,\D,\B) -- cycle;
\draw[dashed] (\C,\C,0) -- (\C,\C,\Z);
\draw[dashed] (\D,\C,0) -- (\D,\C,\Z);
\draw[dashed] (\D,\D,0) -- (\D,\D,\Z);
\draw[dashed] (\C,\D,0) -- (\C,\D,\Z);

%-----------------------------------
% Horizontal slice z = Z : A_xy x {z}
%-----------------------------------
\fill[blue!12, opacity=.75]
  (0,0,\Z) -- (\B,0,\Z) -- (\B,\B,\Z) -- (0,\B,\Z) -- cycle;
\draw[dashed] (\C,\C,\B) -- (\C,\C,\Z);

%-----------------------------------
% Slice outlines
%-----------------------------------
\draw[thick, blue!70!black]
  (0,0,\Z) -- (\B,0,\Z) -- (\B,\B,\Z) -- (0,\B,\Z) -- cycle;

\draw[thick, blue!70!black]
  (\C,\C,\Z) -- (\D,\C,\Z) -- (\D,\D,\Z) -- (\C,\D,\Z) -- cycle;

  %-----------------------------------
% Green intersection of plank with blue slice
% Since z = 1.1, strip is 1.1 <= x+y <= 1.7
%-----------------------------------
\fill[green!45, opacity=.70]
  (0,1.1,\Z) --
  (0,1.7,\Z) --
  (1.7,0,\Z) --
  (1.1,0,\Z) -- cycle;

  %-----------------------------------
% Green strip boundary lines
%-----------------------------------
\draw[green!60!black, thick]
  (0,1.1,\Z) -- (1.1,0,\Z);

\draw[green!60!black, thick]
  (0,1.7,\Z) -- (1.7,0,\Z);

%-----------------------------------
% Mask inner square to turn slice into annulus
%-----------------------------------
\fill[white, opacity=1]
  (\C,\C,\Z) -- (\D,\C,\Z) -- (\D,\D,\Z) -- (\C,\D,\Z) -- cycle;

%OVERLAY DOTTED GRAY
\draw[dashed] (\C,\C,\Z) -- (\C,\C,\B);
\draw[dashed] (\D,\C,\B) -- (\D,\C,\Z);
\draw[dashed] (\D,\D,\Z) -- (\D,\D,\B);
\draw[dashed] (\C,\D,\Z) -- (\C,\D,\B);
\draw[dashed] (\C,\D,\Z-.04) -- (\C,\D,\Z-.32);

\draw (\B,0,0) -- (\B,0,\B);
\draw (0,\B,\Z-0.212) -- (0,\B,\Z-.48);

%----

%-----------------------------------
% Fill the 3D plank P (yellow fills only)
%-----------------------------------
\fill[yellow!45, opacity=.22] (A1)--(B1)--(B2)--(A2)--cycle;
\fill[yellow!45, opacity=.22] (B1)--(C1)--(C2)--(B2)--cycle;
\fill[yellow!45, opacity=.22] (C1)--(D1)--(D2)--(C2)--cycle;
\fill[yellow!45, opacity=.22] (D1)--(E1)--(E2)--(D2)--cycle;
\fill[yellow!45, opacity=.22] (E1)--(F1)--(F2)--(E2)--cycle;
\fill[yellow!45, opacity=.22] (F1)--(A1)--(A2)--(F2)--cycle;
\fill[yellow!45, opacity=.14] (A2)--(B2)--(C2)--(D2)--(E2)--(F2)--cycle;

\draw[yellow!70!black, line width=0.9pt] (A1)--(B1);
\draw[yellow!70!black, line width=0.9pt] (B1)--(C1);
\draw[yellow!70!black, line width=0.9pt] (C1)--(D1);
\draw[yellow!70!black, line width=0.9pt] (D1)--(E1);
\draw[yellow!70!black, line width=0.9pt] (E1)--(F1);

\draw[yellow!70!black, line width=0.9pt] (0,1.1,1.1)--(0,0.2,2);
\draw[green!30!orange!80!yellow!60, line width=0.9pt, opacity=0.5] (0,1.1,1.1)--(0,1.31,0.89);
\draw[yellow!70!black, line width=0.9pt] (0,1.31,0.89)--(0,1.386,0.814);
\draw[green!30!orange!80!yellow!60, line width=0.9pt, opacity=0.5] (0,1.625,0.575)--(0,1.386,0.814);
\draw[yellow!70!black, line width=0.9pt] (0,1.625,0.575)--(F1);

\draw[yellow!70!black, line width=0.9pt] (A2)--(B2);
\draw[yellow!70!black, line width=0.9pt] (B2)--(C2);
\draw[yellow!70!black, line width=0.9pt] (C2)--(D2);
\draw[yellow!70!black, line width=0.9pt] (D2)--(E2);

\draw[yellow!70!black, line width=0.9pt] (0.185,2,0.615)--(0,2,0.8);
\draw[yellow!70!blue, line width=0.9pt, opacity=0.4] (0.185,2,0.615)--(0.34,2,0.46);
\draw[yellow!70!black, line width=0.9pt] (E2)--(0.34,2,0.46);

\draw[yellow!70!black, line width=0.9pt] (0,1.7,\Z)--(A2);
\draw[yellow!70!blue, line width=0.9pt, opacity=0.4] (0,1.7,1.1)--(0,1.91,0.89);
\draw[yellow!70!black, line width=0.9pt] (0,1.91,0.89)--(F2);

\draw[yellow!70!black, line width=0.9pt] (A1)--(A2);
\draw[yellow!70!black, line width=0.9pt] (B1)--(B2);
\draw[yellow!70!black, line width=0.9pt] (C1)--(C2);
\draw[yellow!70!black, line width=0.9pt] (D1)--(D2);
\draw[yellow!70!black, line width=0.9pt] (E1)--(E2);
\draw[yellow!70!black, line width=0.9pt] (F1)--(F2);

\draw[yellow!70!green!45!white!70, line width=0.9pt, opacity=0.8] (0,2,.4675)--(0,2,.614);

%-----------------------------------
% Height guide
%-----------------------------------
\node[right] at (\B,\B,\Z) {$z$};

\draw (\B,0,0) -- (\B,0,\B);
\draw[yellow!70!black, line width=0.9pt] (C1)--(C2);

%---

%-----------------------------------
% Labels
%-----------------------------------
\node at (2.228,1.12,\Z-0.185) {$A_{xy}\times\{z\}$};
\node at (0,0.48,\B+0.18) {$P$};
\node at (.31,0.58,\Z-.35) {\small $(A_{xy}\times\{z\})\cap P$};
\node at (0,1.4,\B+0.18) {$Q$};

\end{tikzpicture}
\caption{A diagonal plank through $Q$ is depicted in yellow; the green region is its intersection with the blue annular slice $A_{xy}\times\{z\}$.}
\label{fig:xy-slice-cubic-annulus-plank}
\end{figure}

Our final ingredient is the spatial analogue of \Cref{lem:uncovered-area-general}. For simplicity, we use a coarse estimate, which still suffices for proving the \emph{existence} of $\ee_3>0$ in Theorem~\ref{thm:main}.

\begin{lemma}\label{lem:uncovered-volume-general}
    Let $\mathcal{C}$ be a finite collection of 3-dimensional planks. Then for any $a\in(0,\frac12)$,
    \[\vol(A_a \cap \mathcal{C}) \leq  \sqrt{2} w(\mathcal{C}).\]
\end{lemma}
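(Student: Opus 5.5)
The plan is to bound the contribution of each plank separately and sum, using only the crude estimate
\[
\vol(A_a\cap P)\le \vol(Q\cap P)\le \sqrt2\, w(P).
\]
Since $A_a\subset Q$, it suffices to show that a single plank $P$ of width $w$ and unit normal $u$ meets the unit cube in volume at most $\sqrt2\,w$. Summing over $P\in\C$ then gives $\vol(A_a\cap\C)\le\sum_P\vol(A_a\cap P)\le\sqrt2\,w(\C)$.

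The single-plank bound follows by slicing along $u$. Write
\[
\vol(Q\cap P)=\int_{t-w/2}^{t+w/2}\area\bigl(Q\cap\{x:\langle x,u\rangle=s\}\bigr)\,ds,
\]
so it is enough to know that every planar section of the unit cube has area at most $\sqrt2$. This is Ball's cube slicing theorem in its classical form: hyperplane sections of $[0,1]^d$ through any point have $(d-1)$-volume at most $\sqrt2$. For $d=3$ one could instead give a direct elementary argument, which I would mention as an alternative. Project the section onto the coordinate plane orthogonal to the coordinate axis with the largest $|u_i|$. This projection lands in a unit square, so it has area at most $1$. Area scales under this projection by the factor $|u_i|$, and $|u_i|\ge 1/\sqrt3$. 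This gives the weaker bound $\sqrt3$, which would also suffice for existence; the constant $\sqrt2$ needs Ball's theorem or a finer argument.

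The main obstacle is exactly the section bound $\sqrt2$. If citing Ball is not acceptable, the cleanest self-contained route I would try is a two-dimensional reduction in the spirit of the proof of \Cref{lemma:uncovered-volume}. Slice $Q\cap P$ by horizontal planes $z=z_0$. By \eqref{eq:whp0} each slice is a planar plank of width $w/\sqrt{1-u_z^2}$. Within the unit square, such a plank covers area at most its width times the maximal chord length $\sqrt2$ of the square. Integrating over $z_0$ gives
\[
\vol(Q\cap P)\le \sqrt2\,w/\sqrt{1-u_z^2}.
\]
Choosing the slicing axis to be the coordinate with the smallest $|u_i|$ ensures $u_z^2\le 1/3$, which yields $\sqrt3\,w$. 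To reach $\sqrt2$ one must also use that the chord length and the width factor cannot both be extremal at once. So I would most likely rely on Ball's theorem for the stated constant, noting that any constant independent of $a$ suffices for the application.
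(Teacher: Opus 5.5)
Your proof is the same as the paper's: you bound each plank separately using $A_a\subset Q$, integrate the section areas across the width of the plank, apply Ball's bound $\sqrt2$ for planar sections of the unit cube, and sum over $P\in\C$. One small precision, which the paper also leaves implicit: Ball's theorem is stated for central sections, and extending it to all parallel sections uses Brunn's concavity principle for the centrally symmetric cube (in $d=3$ the bound $\sqrt2$ is classical anyway, and the diagonal $1\times\sqrt2$ rectangle attains it).
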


\begin{proof}
It is a standard fact that the largest area of intersection between a plane and the unit cube is at most $\sqrt 2$ (see \cite{Ball1986}). By integration across the width of a plank, it follows that for any $3$-dimensional plank $P$,
\[\vol(A_a\cap P)\le \sqrt 2w(P).
\]
Summing over $P \in \C$ proves the claim.
\end{proof}

We are ready to establish the proof of the main result in the $3$-dimensional setting.

\begin{proof}[Proof of Theorem~\ref{thm:main} for $d=3$]
By Remark~\ref{rk:open planks}, it suffices to consider open planks. 
Write $\delta_0 = 1 - 1/\sqrt{3}$.
Suppose, toward a contradiction, that for some $\eps > 0$, there is a suitably placed cube $\delta Q \subset Q^\circ$ with 
\[
\delta= \delta_0 + \eps = 1  - \frac 1 {\sqrt 3}+\eps 
\]
so that the annulus $Q\setminus\delta Q$ has a plank covering $\mathcal C=\{P_1,\dots,P_n\}$
with $w(\mathcal C)<1$. As before, let $v_i$ and $u_i$ denote the cross-normal vector and unit normal of $P_i$, respectively.

By Lemma~\ref{lemma:new plank}, the Bang point set $X_{\mathcal C}$
has width at least $1-\delta$ in each coordinate direction. Hence, similarly to the planar case, 
\[
\sum_i \|v_i\|_1\ge 3(1-\delta)=\sqrt3-3\eps.
\]
Since $\|v_i\|_1=w(P_i)\|u_i\|_1$ and $w(\mathcal C)<1$, it follows that
\begin{equation*}\label{eq:3d-deficit}
\sum_i w(P_i)\bigl(\sqrt3-\|u_i\|_1\bigr)<3\eps.
\end{equation*}

For a parameter $\alpha>1$ to be fixed at the end of the proof, we partition the planks in $\mathcal{C}$ into diagonal and non-diagonal classes: 
   \begin{align*}
        \mathcal{D} &= \{P_i:\|u_i\|_1\ge \sqrt 3-3\alpha\eps\}\\
        \mathcal{N} &= \{P_i:\|u_i\|_1< \sqrt 3-3\alpha\eps\}.
    \end{align*}
Using an identical argument to \Cref{claim: wN}, we  derive 
\begin{equation}\label{eq:wN3d}
w(\mathcal N) \le \frac 1 \alpha.
\end{equation}
Next, take a thickness parameter $a\in(0,\frac12)$. Provided $3\alpha\eps<\frac23(\sqrt2-1)$,
Lemma~\ref{lemma:uncovered-volume} gives
\[
\vol(A_a\cap\mathcal D)
\le
3aD(3\alpha\eps)\,w(\mathcal D),
\]
while Lemma~\ref{lem:uncovered-volume-general} yields
\[
\vol(A_a\cap\mathcal N)
\le
\sqrt2\,w(\mathcal N).
\]
If, moreover, $a\le\min\{\delta,1-\delta\}$, then
\begin{equation*}\label{eq:hole-volume-3d}
\vol(A_a\cap\delta Q)
\le
\delta^3-(\delta-a)^3.
\end{equation*}
Indeed, writing $\delta Q=I_1\times I_2\times I_3$, each interval
$I_j\subset[0,1]$ has length $\delta$ and satisfies
\[
|I_j\cap(a,1-a)|\ge\delta-a,
\]
thus the hole $(a,1-a)^3$ covers at least $(\delta-a)^3$ volume of $\delta Q$.

Analogously to the planar case, we will reach a contradiction by choosing the parameters so that the volume covered by members of $\C$, and $\delta Q$ within $A_a$ is strictly smaller than $\vol(A_a)$.

For ease of notation, set $\mu:=3aD(3\alpha\eps).$ When choosing the parameters we will ensure $\mu<\sqrt2$ holds. Combining the volume bounds above with $w(\mathcal D)<1-w(\mathcal N)$ and \eqref{eq:wN3d} yields
\begin{align*}
\vol\bigl(A_a\cap(\mathcal C\cup\delta Q)\bigr) & \leq \vol(A_a\cap \D) +\vol(A_a\cap \N)+\vol(A_a\cap \delta Q) \\
&\le
\mu\,w(\mathcal D)+\sqrt2\,w(\mathcal N)
+\delta^3-(\delta-a)^3\\
&<
\mu+(\sqrt2-\mu)w(\mathcal N)
+\delta^3-(\delta-a)^3\\
&\le
\mu\left(1-\frac1\alpha\right)
+\frac{\sqrt2}{\alpha}
+\delta^3-(\delta-a)^3.
\end{align*}
Consequently, since $\vol(A_a)=1-(1-2a)^3$, a contradiction follows if
\begin{equation}\label{eq:U3}
V(\eps,a,\alpha)
:=1-(1-2a)^3
 -3aD(3\alpha\eps)\left(1-\frac1\alpha\right)
-\frac{\sqrt2}{\alpha}-\delta^3+(\delta-a)^3 >0.
\end{equation}

The function $V$ is continuous for $0<a<\frac{\sqrt2}{3\sqrt3}$,
$\alpha>1$ and $0\le3\alpha\eps<\frac23(\sqrt2-1)$. By containment, proving the theorem for $\varepsilon=\varepsilon_3$ also proves it for every $0<\varepsilon\le\varepsilon_3.$ Therefore, it remains to prove that for some $\eps > 0$ we may choose $a>0$ and $\alpha >1$ subject to the constraints
\[
a<\frac{\sqrt2}{3\sqrt3},
\qquad
3\alpha\eps<\frac23(\sqrt2-1),
\qquad
3aD(3\alpha\eps)<\sqrt2
\]
so that $V(\eps, a, \alpha) > 0$. For any $0< a < \frac{\sqrt2}{3\sqrt3}$, by \eqref{eq:betazero},
\[
V(0,a,\alpha)=1-(1-2a)^3- 3\sqrt3\,a -\frac 1 \alpha (\sqrt{2} - 3 \sqrt{3} a) -\delta_0^3+(\delta_0-a)^3.
\]
Therefore, 
\begin{align*}
\lim_{\alpha \to \infty}V(0,a,\alpha) & =
1-(1-2a)^3- 3\sqrt3\,a -\delta_0^3+(\delta_0-a)^3\\
&=(2-\sqrt3)a-(9+\sqrt3)a^2+7a^3.
\end{align*}
Since $2-\sqrt3>0$, this expression is positive for every sufficiently
small $a>0$. For any such $a$ in the prescribed domain,  choose $\alpha>1$ sufficiently large that
\[
V(0,a,\alpha)>0.
\]
Finally, by continuity of $V$ and $D$, choose $\eps_3>0$
sufficiently small so that 
\[
V(\eps_3,a,\alpha)>0,
\qquad
3\alpha\eps_3<\frac23(\sqrt2-1),
\qquad
3aD(3\alpha\eps_3)<\sqrt2.
\]
For example, let 
\[\ee_3 = 8\times 10^{-8},\quad \alpha = 2850, \quad a=0.0068.\]
Then \eqref{eq:U3} shows that a positive amount of $A_a$ is uncovered
by $\mathcal C\cup\delta Q$, contradicting the covering assumption. This finishes the proof of Theorem~\ref{thm:main} for $d=3$.
\end{proof}

In order to provide a quantitative value for $\eps_3$ above, we numerically find triples $(\eps, a, \alpha)$ with $V(\eps, a, \alpha)>0$. The results are shown in \Cref{fig:heatmap3}. The numerically estimated threshold of $\ee$ occurs at the plus sign in \Cref{fig:heatmap3}, with coordinates
\begin{align*}
    \ee_{\text{max}} &= 8.616\times 10^{-8},\quad \al = 2848.677,\quad a = 0.006832.
\end{align*}

\begin{figure}[h!]
    \centering
    \includegraphics[scale=0.5]{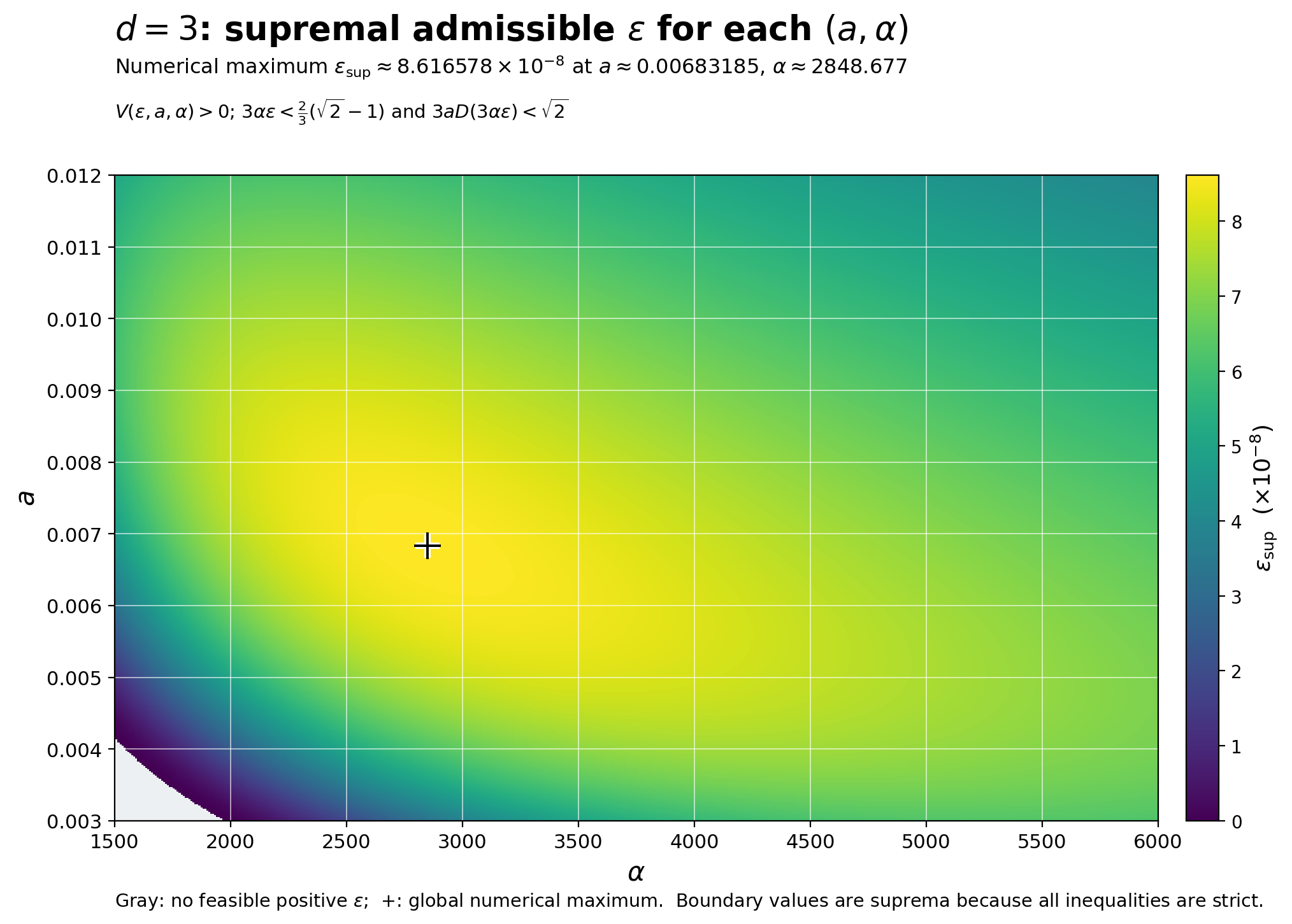}
    \caption{Heatmap of maximal $\ee$ such that $V(\eps, a, \alpha)>0$ for given pairs $(a, \al)$.}
    \label{fig:heatmap3}
\end{figure}

\section{Conclusion}

We note that the techniques we use to improve upon the existing bounds of Bezdek and Smurov, Bogataya, and Bogatyi in dimensions 2 and 3 are not sharp. Thus, to achieve the bound of $1-\frac1d$ in Conjecture~\ref{conj:bezdek}, stronger estimates or additional ideas would be needed.

\medskip

The present slicing estimates do not extend to $d\ge4$. Intersecting a diagonal plank with a two-dimensional coordinate plane increases its width by $\sqrt{d/2}$. Combining the planar collar estimates consequently gives
\[
\vol_d(A_a\cap\mathcal D)\le d\sqrt d\,a\,w(\mathcal D)
\]
for perfectly diagonal planks: there are $d \choose 2$ such tubes, each of which contributes at most $2 a \sqrt{d} w(\mathcal D)$ covered volume, while every collar point belongs to at least $d-1$ such tubes, which yields the above estimate. However, for $d\ge4$,
\[
\vol_d(A_a)=1-(1-2a)^d<2da\le d\sqrt d\,a.
\]
Thus, the bound is already too large when the total width approaches $1$, even before accounting for the inner cube.

Extending our stability argument to dimensions $d\ge4$ would require sharper volume estimates for boundary collars covered by nearly diagonal planks. For the centered cube annulus $A_a\subset\R^d$ defined by \eqref{eq:Adef} and a plank $P$ whose unit normal $u$ satisfies $\|u\|_1\ge\sqrt d-\beta$, this would entail establishing a sufficiently sharp bound of the form
\[
\vol_d(A_a\cap P)\le F_d(a,\beta)\,w(P),
\]
uniformly over all such planks $P$.

\section*{Acknowledgements}
This work was conducted as part of Undergraduate Research at the Budapest Semesters in Mathematics Program during the fall of 2025. We thank BSM for its support and for providing a good working environment. AI tools were used solely for text editing and the creation of \Cref{fig:heatmap3}. %Codex was used to generate the heatmaps.

%\nocite{*}
\bibliographystyle{plain}
\bibliography{PlankCube_refs.bib}

\bigskip

\footnotesize{Research of G. A. was partially supported by the ERC Advanced Grant "GeoScape" no.  882971, by Hungarian National Research (NKFIH) grants no. KKP-133819, 147145, 147544, and 150151, which has been implemented with the support provided by the Ministry of Culture and Innovation of Hungary from the National Research, Development and Innovation Fund, financed under the ADVANCED-24 funding scheme. This research was funded by the grant 2024-1.2.8-TÉT-IPARI-CN-2025-00011,
with the support provided by the National Research,
Development and Innovation Office from the National Research,
Development and Innovation Fund, and financed under the
2024-1.2.8-TÉT-IPARI-CN funding scheme. }

\bigskip

\noindent
{\sc Gergely Ambrus}
\smallskip

\noindent
{\em Bolyai Institute, University of Szeged, Hungary,\\ and\\ 
 Alfréd Rényi Institute of Mathematics, Budapest, Hungary
 }
\smallskip

\noindent
e-mail address: \texttt{ambrus@server.math.u-szeged.hu; ambrus@renyi.hu}

\bigskip

\noindent
{\sc Matthew Corbo}
\smallskip

\noindent
{\em University of California, Berkeley, Berkeley, CA, United States
 }
\smallskip

\noindent
e-mail address: \texttt{corbo@berkeley.edu}

\bigskip

\noindent
{\sc Dakota Nobuyuki Goto}
\smallskip

\noindent
{\em Fordham University, New York, NY, United States
 }
\smallskip

\noindent
e-mail address: \texttt{dgoto@fordham.edu}

\bigskip

\noindent
{\sc Desmond Leitz}
\smallskip

\noindent
{\em Stony Brook University, Stony Brook, NY, United States
 }
\smallskip

\noindent
e-mail address: \texttt{desmond.leitz@stonybrook.edu}

\bigskip

\noindent
{\sc Kai Mawhinney}
\smallskip

\noindent
{\em Harvey Mudd College, Claremont, CA, United States
 }
\smallskip

\noindent
e-mail address: \texttt{kmawhinney@hmc.edu}

\end{document}